\documentclass[times,authoryear,12pt]{elsarticle}
\usepackage{natbib}
\date{\today}
\usepackage{geometry,txfonts}
\usepackage[labelfont=bf]{caption}
\usepackage{amsmath,amsfonts,amssymb,amsthm,booktabs,color,epsfig,graphicx,url}
\urldef\myurl\url{www.ualberta.ca/~dwiens/}
\usepackage{appendix}
\usepackage[utf8]{inputenc}
\usepackage[english]{babel}
\textheight=22cm
\usepackage{fancyhdr}
\pagestyle{myheadings}
\markboth{Douglas P. Wiens}{To ignore dependencies is perhaps not a sin}
\newtheorem{theorem}{Theorem}

\newtheorem{lemma}{Lemma}
\newtheorem{remark}{Remark}

 \makeatletter
\def\ps@pprintTitle{%
  \let\@oddhead\@empty
  \let\@evenhead\@empty
  \def\@oddfoot{\reset@font\hfil\thepage\hfil}
  \let\@evenfoot\@oddfoot
}
\makeatother

\def\func{}

\journal{U. Alberta preprint series}

\begin{document}
\bibliographystyle{natbib}
\begin{frontmatter}
\title{To ignore dependencies is perhaps not a sin}

\author[A1]{Douglas P. Wiens\corref{mycorrespondingauthor}}

\address[A1]{Mathematical \& Statistical Sciences,
	University of Alberta,
	Edmonton, Canada,  T6G 2G1
 \newline \newline \today}

\cortext[mycorrespondingauthor]{E-mail: \url{doug.wiens@ualberta.ca}. 
Supplementary material is at {\myurl}. 
\\A version of this work is in \textit{Biometrika} as DOI: 10.1093/biomet/asaf025 \bf{`On the minimax robustness against correlation and heteroscedasticity of ordinary least squares among generalized least squares estimates of regression'}.
}

\begin{abstract}
We present a result according to which certain
functions of covariance matrices are maximized at scalar multiples of the
identity matrix. In a statistical context in which such functions measure
loss, this says that the least favourable form of dependence is in fact
independence, so that a procedure optimal for i.i.d.\ data can be minimax.
In particular, the ordinary least squares (\textsc{ols}) estimate of a
correctly specified regression response is minimax among generalized least
squares (\textsc{gls}) estimates, when the maximum is taken over certain
classes of error covariance structures and the loss function possesses a
natural monotonicity property. An implication is that it can be not only
safe, but optimal to ignore such departures from the usual assumption of
i.i.d.\ errors. We then consider regression models in which the response
function is possibly misspecified, and show that \textsc{ols} is minimax if
the design is uniform on its support, but that this often fails otherwise.
We go on to investigate the interplay between minimax \textsc{gls}
procedures and minimax designs, leading us to extend, to robustness against
dependencies, an existing observation -- that robustness against model
misspecifications is increased by splitting replicates into clusters of
observations at nearby locations.
\end{abstract}

\begin{keyword} 
design \sep 
induced matrix norm \sep 
Loewner ordering \sep
particle swarm optimization \sep
robustness.
\MSC[2010] Primary 62G35 \sep Secondary 62K05
\end{keyword}
\end{frontmatter}

\section{Introduction and summary}

When carrying out a study, whether observational or designed, calling for a
regression analysis the investigator may be faced with questions regarding
possible correlations or heteroscedasticity within the data. If there are
such departures from the assumptions underlying the use of the ordinary
least squares (\textsc{ols}) estimates of the regression parameters, then
the use of generalized least squares (\textsc{gls}) might be called for. In
its pure form, as envisioned by \cite{a35}, this calls for the use of the
inverse of the covariance matrix $C$, i.e.\ the \textit{precision} matrix,
of the random errors. This is inconvenient, since $C$ is rarely known and,
even if there is some prior knowledge of its structure, before the study is
carried out there are no data from which accurate estimates of its elements
might be made. If a consistent estimate $\hat{C}^{-1}$ of the precision
matrix does exist, then one can employ `feasible generalized least squares'
estimation - see e.g. \cite{fjh84}. An example is the
Cochrane-Orcutt procedure (\cite{co49}), which can be applied
iteratively in AR(1) models. Otherwise a positive definite `pseudo
precision' matrix $P$ might be employed. With data $y$ and design matrix $X$
this leads to the estimate 
\begin{equation}
\hat{\theta}_{\text{\textsc{gls}}}=\arg \min_{\theta }\left\Vert
P^{1/2}\left( y-X\theta \right) \right\Vert ^{2}=\left( X^{\prime }PX\right)
^{-1}X^{\prime }Py.  \label{glse}
\end{equation}
In \cite{w24a} a similar problem was addressed, pertaining to designed
experiments whose data are to be analyzed by \textsc{ols}. A lemma, restated
below as Lemma 1, was used to show that certain commonly employed loss
functions, taking covariance matrices as their arguments and increasing with
respect to the Loewner ordering by positive semidefiniteness, are maximized
at scalar multiples of the identity matrix. This has the somewhat surprising
statistical interpretation that the least favourable form of dependence is
in fact independence. The lemma was used to show that the assumption of
uncorrelated and homoscedastic errors at the design stage of an experiment is
in fact a \textit{minimax} strategy, within broad classes of alternate
covariance structures.

In this article we study the implications of the lemma in the problem of
choosing between \textsc{ols} and \textsc{gls}. We first show that, when the
form of the regression response is accurately modelled, then it can be safe,
and indeed optimal -- in a minimax sense -- to ignore possible departures
from independence and homoscedasticity, varying over certain large classes
of such departures. This is because the common functions measuring the loss
incurred by \textsc{gls}, when the covariance matrix of the errors is $C$,
are \textit{maximized} when $C$ is a multiple of the identity matrix. But in
that case the best \textsc{gls} estimate is \textsc{ols}, i.e.\ \textsc{ols}
is a minimax procedure.

We then consider the case of misspecified regression models, in which bias
becomes a component of the integrated mean squared prediction error (\textsc{%
imspe}). The \textsc{imspe} is maximized over $C$ and over the departures
from the fitted linear model. We show that, if a \textsc{gls} with (pseudo)
precision matrix $P$ is employed, then the variance component of this
maximum continues to be minimized by $P=I$, i.e.\ by \textsc{ols}, but the
bias generally does not and, depending upon the design, \textsc{ols} can
fail to be a minimax procedure. We show however that if the design is
uniform on its support then \textsc{ols} is minimax. Otherwise, \textsc{ols}
can fail to be minimax when the design emphasizes bias reduction over
variance reduction to a sufficiently large extent.

We also construct minimax designs -- minimizing the maximum\ \textsc{imspe}
over the design -- and combine them with minimax choices of $P$. These
designs are often uniform on their supports, and so \textsc{ols} is a
minimax procedure in this context. The design uniformity is attained by
replacing the replicates that are a feature of `classically optimal' designs
minimizing variance alone by clusters of observations at nearby design
points.

A summary of our findings is that, if a sensible design is chosen, then 
\textsc{ols} is at least `almost' a minimax \textsc{gls} procedure, often exactly so. We
conclude that, for Loewner-increasing loss functions, and for covariance
matrices $C$ varying over the classes covered by Lemma 1, the simplicity of 
\textsc{ols} makes it a robust and attractive alternative to \textsc{gls}.

The computations for this article were carried out in \textsc{matlab;} the
code is available on the author's personal website.

\section{A useful lemma\label{sec: lemma}}

Suppose that $\left\Vert \cdot \right\Vert _{M}$ is a matrix norm, induced
by the vector norm $\left\Vert \cdot \right\Vert _{V}$, i.e.\ 
\begin{equation*}
\left\Vert C\right\Vert _{M}=\sup_{\left\Vert x\right\Vert _{V}\text{ }%
=1}\left\Vert Cx\right\Vert _{V}.
\end{equation*}%
We use the subscript `$M$' when referring to an arbitrary matrix norm, but
adopt special notation in the following cases:

\noindent (i) For the Euclidean norm $\left\Vert x\right\Vert
_{V}=(x^{\prime }x)^{1/2}$, the matrix norm is denoted $\left\Vert
C\right\Vert _{E}$ and is the spectral radius, i.e.\ the root of the maximum
eigenvalue of $C^{\prime }C$. This is the maximum eigenvalue of $C$ if $C$
is a covariance matrix, i.e.\ is symmetric and positive semidefinite.

\noindent (ii) For the sup norm $\left\Vert x\right\Vert
_{V}=\max_{i}\left\vert x_{i}\right\vert $, the matrix norm $\left\Vert
C\right\Vert _{\infty }$ is $\max_{i}\sum_{j}\left\vert c_{ij}\right\vert $,
the maximum absolute row sum.

\noindent (iii) For the 1-norm $\left\Vert x\right\Vert
_{V}=\sum_{i}\left\vert x_{i}\right\vert $, the matrix norm $\left\Vert
C\right\Vert _{1}$ is $\max_{j}\sum_{i}\left\vert c_{ij}\right\vert $, the
maximum absolute column sum. For symmetric matrices, $\left\Vert
C\right\Vert _{1}=\left\Vert C\right\Vert _{\infty }$.

Now suppose that the loss function in a statistical problem is $\mathcal{L}%
\left( C\right) $, where $C$ is an $n\times n$ covariance matrix and $%
\mathcal{L}\left( \mathbf{\cdot }\right) $ is non-decreasing in the Loewner
ordering: 
\begin{equation*}
A\preceq B\Rightarrow \mathcal{L}\left( A\right) \leq \mathcal{L}\left(
B\right) .
\end{equation*}%
Here $A\preceq B$ means that $B-A\succeq 0$, i.e.\ is positive semidefinite
(p.s.d.).

The following lemma is established in \cite{w24a}.

\begin{lemma}
For $\eta ^{2}>0$, covariance matrix $C$ and induced norm $\left\Vert
C\right\Vert _{M}$, define 
\begin{equation}
\mathcal{C}_{M}=\left\{ C\left\vert {}\right. C\succeq 0\text{ and }%
\left\Vert C\right\Vert _{M}\leq \eta ^{2}\right\} .  \label{eta-bound}
\end{equation}%
For the norm $\left\Vert \mathbf{\cdot }\right\Vert _{E}$ an equivalent
definition is%
\begin{equation*}
\mathcal{C}_{E}=\left\{ C\left\vert {}\right. 0\preceq C\preceq \eta
^{2}I_{n}\right\} .
\end{equation*}%
Then:

\noindent (i) In any such class $\mathcal{C}_{M}$, $\max_{\mathcal{C}_{M}}%
\mathcal{L}\left( C\right) =\mathcal{L}\left( \eta ^{2}I_{n}\right) $.

\noindent (ii) If $\mathcal{C}^{\prime }$ $\mathcal{\subseteq C}_{M}$ and $%
\eta ^{2}I_{n}\in \mathcal{C}^{\prime }$, then $\max_{\mathcal{C}^{\prime }}%
\mathcal{L}\left( C\right) =\mathcal{L}\left( \eta ^{2}I_{n}\right) $.
\end{lemma}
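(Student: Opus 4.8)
The plan is to reduce every constraint set $\mathcal{C}_M$ to a single Loewner constraint, after which the monotonicity of $\mathcal{L}$ does all the work. The key observation is that for any induced matrix norm the spectral radius is dominated by the norm: if $\lambda$ is an eigenvalue of $C$ with (real) eigenvector $v$, then $\left\vert \lambda \right\vert \left\Vert v\right\Vert_V = \left\Vert Cv\right\Vert_V \leq \left\Vert C\right\Vert_M \left\Vert v\right\Vert_V$, so that $\left\vert \lambda \right\vert \leq \left\Vert C\right\Vert_M$ after dividing by $\left\Vert v\right\Vert_V > 0$. Because $C$ is symmetric and p.s.d.\ its spectral radius is its largest eigenvalue $\lambda_{\max}(C)$, and so the defining constraint $\left\Vert C\right\Vert_M \leq \eta^2$ forces $\lambda_{\max}(C) \leq \eta^2$, which is exactly the statement $C \preceq \eta^2 I_n$. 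Thus $\mathcal{C}_M \subseteq \{C : 0 \preceq C \preceq \eta^2 I_n\}$ for every induced norm simultaneously. I would also note that $\eta^2 I_n$ itself lies in $\mathcal{C}_M$, since $\left\Vert I_n\right\Vert_M = 1$ for any induced norm and hence $\left\Vert \eta^2 I_n\right\Vert_M = \eta^2$.

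With this reduction in hand, part (i) is immediate: for any $C \in \mathcal{C}_M$ the domination $C \preceq \eta^2 I_n$ together with the Loewner-monotonicity of $\mathcal{L}$ gives $\mathcal{L}(C) \leq \mathcal{L}(\eta^2 I_n)$, while the membership $\eta^2 I_n \in \mathcal{C}_M$ shows this upper bound is attained, so $\max_{\mathcal{C}_M} \mathcal{L}(C) = \mathcal{L}(\eta^2 I_n)$. Part (ii) follows the same lines: any $\mathcal{C}' \subseteq \mathcal{C}_M$ inherits the bound $\mathcal{L}(C) \leq \mathcal{L}(\eta^2 I_n)$, and the hypothesis $\eta^2 I_n \in \mathcal{C}'$ guarantees attainment. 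For the stated equivalence of $\mathcal{C}_E$, I would simply use that for symmetric p.s.d.\ $C$ the Euclidean (spectral) norm equals $\lambda_{\max}(C)$, so $\left\Vert C\right\Vert_E \leq \eta^2$ is literally $C \preceq \eta^2 I_n$; here the reduction above is an identity rather than a one-sided containment.

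There is little computational difficulty; the entire content is the uniform spectral-radius bound $\rho(C) \leq \left\Vert C\right\Vert_M$. The one point worth emphasizing -- and the step I would expect a reader to find least obvious -- is that this single inequality lets all the superficially different norm constraints collapse onto the one Loewner inequality $C \preceq \eta^2 I_n$. Once that is recognized, symmetry and positive semidefiniteness convert spectral radius to maximum eigenvalue, and the monotonicity hypothesis on $\mathcal{L}$ finishes the argument with no further estimates.
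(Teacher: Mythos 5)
Your proposal is correct and complete: the bound $\left\vert \lambda \right\vert \leq \left\Vert C\right\Vert _{M}$ for eigenvalues under any induced norm, together with the symmetry and positive semidefiniteness of $C$, reduces every constraint $\left\Vert C\right\Vert _{M}\leq \eta ^{2}$ to the single Loewner inequality $C\preceq \eta ^{2}I_{n}$, and Loewner monotonicity of $\mathcal{L}$ plus $\left\Vert \eta ^{2}I_{n}\right\Vert _{M}=\eta ^{2}$ (so that $\eta ^{2}I_{n}$ lies in $\mathcal{C}_{M}$, and in $\mathcal{C}^{\prime }$ by hypothesis in part (ii)) gives attainment of the maximum at $\eta ^{2}I_{n}$. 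Note that this paper does not itself prove the lemma -- it is quoted from \cite{w24a} -- but your argument is precisely the standard one establishing it, so the approach matches rather than diverges.
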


A consequence of (i) of this lemma is that if one is carrying out a
statistical procedure with loss function $\mathcal{L}\left( C\right) $, then
a version of the procedure which minimizes $\mathcal{L}\left( \eta
^{2}I_{n}\right) $ is \textit{minimax} as $C$ varies over $\mathcal{C}_{M}$.

 An interpretation of the lemma is that, in attempting to maximize loss by altering the correlations or increasing the variances of $C$, one should always choose the latter. But the procedures discussed in this article do not depend on the particular 
value of $\eta ^{2}$ -- its only role is to ensure that $\mathcal{C}_{M}$ is
large enough to contain the departures of interest.

\section{Generalized least squares regression estimates when the response is
correctly specified \label{sec: gls}}

Consider the linear model 
\begin{equation}
y=X\theta +\varepsilon  \label{lin model}
\end{equation}%
for $X_{n\times p}$ of rank $p$. Suppose that the random errors $\varepsilon 
$ have covariance matrix $C\in \mathcal{C}_{M}$. If $C$ is \textit{known}
then the `best linear unbiased estimate' is $\hat{\theta}_{\text{\textsc{blue%
}}}=$ $\left( X^{\prime }C^{-1}X\right) ^{-1}X^{\prime }C^{-1}y$. In the
more common case that the covariances are at best only vaguely known, an
attractive possibility is to use the generalized least squares estimate (\ref%
{glse}) for a given positive definite (pseudo) precision matrix $P$. If $%
P=C^{-1}$ then the \textsc{blue} is returned. A diagonal $P$ gives `weighted
least squares' (\textsc{wls}). Here we propose choosing $P$ according to the 
\textit{minimax} principle, i.e.\ to minimize the maximum value of an
appropriate function $\mathcal{L}\left( C\right) $ of the covariance matrix
of the estimate, as $C$ varies over $\mathcal{C}_{M}$.

For brevity we drop the `pseudo' and call $P$ a precision matrix. Since $%
\hat{\theta}_{\text{\textsc{gls}}}$ is invariant under multiplication of $P$
by a scalar, we assume throughout that%
\begin{equation}
tr\left( P\right) =n.  \label{constraint}
\end{equation}

The covariance matrix of $\hat{\theta}_{\text{\textsc{gls}}}$ is 
\begin{equation*}
\text{\textsc{cov}}\left( \hat{\theta}_{\text{\textsc{gls}}}\left\vert
{}\right. C,P\right) =\left( X^{\prime }PX\right) ^{-1}X^{\prime }PCPX\left(
X^{\prime }PX\right) ^{-1}.
\end{equation*}%
Viewed as a function of $C$ this is non-decreasing in the Loewner ordering,
so that if a function $\Phi $ is non-decreasing in this ordering, then 
\begin{equation*}
\mathcal{L}\left( C\left\vert {}\right. P\right) =\Phi \{\text{\textsc{cov}}(%
\hat{\theta}_{\text{\textsc{gls}}}\mid C,P)\}
\end{equation*}%
is also non-decreasing and the conclusions of the lemma hold:%
\begin{equation*}
\max_{\mathcal{C}_{M}}\mathcal{L}\left( C\left\vert {}\right. P\right) =%
\mathcal{L}\left( \eta ^{2}I_{n}\left\vert {}\right. P\right) =\Phi \left\{
\eta ^{2}\left( X^{\prime }PX\right) ^{-1}XP^{2}X\left( X^{\prime }PX\right)
^{-1}\right\} .
\end{equation*}%
But, by virtue of the Gauss-Markov Theorem and the monotonicity of  $\Phi $,  this last expression is minimized by $P=I_{n}$, i.e.\ by the \textsc{ols}
estimate $\hat{\theta}_{\text{\textsc{ols}}}=\left( X^{\prime }X\right)
^{-1}X^{\prime }y$, with minimum value 
\begin{equation} \label{minmax}
\max_{\mathcal{C}_{M}}\mathcal{L}\left( C\left\vert {}\right. I_{n}\right)
=\Phi \left\{ \eta ^{2}\left( X^{\prime }X\right) ^{-1}\right\} .
\end{equation}%

It is well-known that if $0\preceq \Sigma _{1}\preceq \Sigma _{2}$ then the $%
i$th largest eigenvalue $\lambda _{i}$ of $\Sigma _{2}$ dominates that of $%
\Sigma _{1}$, for all $i$. It follows that $\Phi $ is non-decreasing in the
Loewner ordering in the cases:

\noindent (i) $\Phi \left( \Sigma \right) =tr\left( \Sigma \right)
=\sum_{i}\lambda _{i}\left( \Sigma \right) $;

\noindent (ii) $\Phi \left( \Sigma \right) =det\left( \Sigma \right)
=\prod_{i}\lambda _{i}\left( \Sigma \right) $;

\noindent (iii) $\Phi \left( \Sigma \right) =\max_{i}\lambda _{i}\left(
\Sigma \right) $;

\noindent (iv) $\Phi \left( \Sigma \right) =tr\left( K\Sigma \right) $ for $%
K\succeq 0$.

\noindent Thus if loss is measured in any of these ways and $C\in \mathcal{C}%
_{M}$ then $\hat{\theta}_{\text{\textsc{ols}}}$ is minimax for $\mathcal{C}%
_{M}$ in the class of \textsc{gls} estimates.

Minimax procedures are sometimes criticized for dealing optimally with an
overly pessimistic least favourable case -- see \cite{h72} for a
discussion; such criticism does certainly not apply here.

\begin{remark}
It is of interest to compare (\ref{minmax}) with the maximum loss of the $\textsc{gls}$ estimate 
which assumes that the covariance is $C_{0} \neq I_{n}$ and takes $ P_{0} = C_{0}^{-1}$. 
The ratio of the two maximum losses, i.e. that of the $\textsc{gls}$ estimate to that of 
the $\textsc{ols}$ estimate, is \linebreak $\max_{\mathcal{C}_{M}}\mathcal{L}\left( C\left\vert {}\right. P_{0}\right)
 / \max_{\mathcal{C}_{M}}\mathcal{L}\left( C\left\vert {}\right. I_{n}\right)$, with each maximum 
attained at $\eta^{2} I_{n}$. This is of course $\ge 1$ but can be arbitrarily large. 
See the Appendix for a simple example, with $C_{0}$ as in Example 2 below and
 $\Phi \left( \Sigma \right) =\det\left( \Sigma \right)$, in which this ratio is unbounded. 
\end{remark}

In each of the following examples, we posit a particular covariance
structure for $C$, a norm $\left\Vert C\right\Vert _{M}$, a bound $\eta ^{2}$
and a class $\mathcal{C}^{\prime }$ for which $C\in \mathcal{C}^{\prime
}\subseteq \mathcal{C}_{M}$. In each case $\eta ^{2}I_{n}\in \mathcal{C}%
^{\prime }$, so that statement (ii) of the lemma applies and $\hat{\theta}_{%
\text{\textsc{ols}}}$ is minimax for $\mathcal{C}^{\prime }$ (and for all of 
$\mathcal{C}_{M}$ as well) and with respect to any of the criteria (i) --
(iv).\medskip

\noindent \textbf{Example 1: Independent, heteroscedastic errors}. Suppose
that $C=diag(\sigma _{1}^{2},...,\sigma _{n}^{2})$. Then the discussion
above applies if $\mathcal{C}^{\prime }$ is the subclass of diagonal members
of $\mathcal{C}_{E}$ for $\eta ^{2}=\max_{i}\sigma _{i}^{2}$.\medskip

\noindent \textbf{Example 2}: \textbf{Equicorrelated errors}. Suppose that
the researcher fears that the observations are possibly weakly correlated,
and so considers $C=\sigma ^{2}\left( \left( 1-\rho \right) I_{n}+\rho
1_{n}1_{n}^{\prime }\right) $, with $\left\vert \rho \right\vert \leq \rho
_{\max }$. If $\rho $ $\geq 0$ then $\left\Vert C\right\Vert _{1}=\left\Vert
C\right\Vert _{\infty }=\left\Vert C\right\Vert _{E}=~\sigma ^{2}\left\{
1+\left( n-1\right) \left\vert \rho \right\vert \right\} $, and we take $%
\eta ^{2}=\sigma ^{2}\left\{ 1+\left( n-1\right) \rho _{\max }\right\} $. If 
$\mathcal{C}^{\prime }$ is the subclass of $\mathcal{C}_{1}$ or $\mathcal{C}%
_{\infty }$ or $\mathcal{C}_{E}$ defined by the equicorrelation structure,
then minimaxity of $\hat{\theta}_{\text{\textsc{ols}}}$ for any of these
classes follows. If $\rho <0$ then this continues to hold for $\mathcal{C}%
_{1}=\mathcal{C}_{\infty }$, and for $\mathcal{C}_{E}$ if $\eta ^{2}=\sigma
^{2}\left( 1+\rho _{\max }\right) $.\medskip

\noindent \textbf{Example 3: }\textsc{ma}{\small (1)} \textbf{errors}.
Assume first that the random errors are homoscedastic but are possibly
serially correlated, following an \textsc{ma}{\small (1)} model with 
\textsc{corr}$(\varepsilon _{i},\varepsilon _{j})=\rho I\left( \left\vert
i-j\right\vert =1\right) $ and with $\left\vert \rho \right\vert \leq \rho
_{\max }$. Then $\left\Vert C\right\Vert _{1}=\left\Vert C\right\Vert
_{\infty }\leq \sigma ^{2}\left( 1+2\rho _{\max }\right) =\eta ^{2}$, and in
the discussion above we may take $\mathcal{C}^{\prime }$ to be the subclass
-- containing $\eta ^{2}I_{n}$ -- defined by $c_{ij}=0$ if $\left\vert
i-j\right\vert >1$. If the errors are instead heteroscedastic, then $\sigma
^{2}$ is replaced by $\max_{i}\sigma _{i}^{2}$.\medskip

\noindent \textbf{Example 4: }\textsc{ar}{\small (1)}\textbf{\ errors}. It
is known -- see for instance \cite{t99}, p.\ 182 -- that the eigenvalues
of an \textsc{ar}{\small (1)} autocorrelation matrix with autocorrelation
parameter $\rho $ are bounded, and that the maximum eigenvalue $\lambda
\left( \rho \right) $ has $\lambda ^{\ast }=$ $\max_{\rho }\lambda \left(
\rho \right) >\lambda \left( 0\right) =1$. Then, again under
homoscedasticity, the covariance matrix $C$ has $\left\Vert C\right\Vert
_{E}\leq \sigma ^{2}\lambda ^{\ast }=\eta ^{2}$, and the discussion above
applies when $\mathcal{C}^{\prime }$ is the subclass defined by the
autocorrelation structure.\medskip

\noindent \textbf{Example 5: All of the above. }If $\mathcal{C}$ is the
union of the classes of covariance structures employed in Examples 1-4 ,
then the maximum loss over $\mathcal{C}$ is attained at $\eta _{0}^{2}I_{n}$%
, where $\eta _{0}^{2}$ is the maximum of those in these four examples. Then 
$\hat{\theta}_{\text{\textsc{ols}}}$ is minimax robust against the union of
these classes, since $\eta _{0}^{2}I_{n}$ is in each of them.

\section{Minimax precision matrices in misspecified response models\label%
{sec: minimax weights}}

Working in finite design spaces $\chi =\left\{ x_{i}\right\}
_{i=1}^{N}\subset \mathbb{R}^{d}$, and with $p$-dimensional regressors $%
f\left( x\right) $, \cite{w18} studied design problems for possibly
misspecified regression models 
\begin{equation}
Y\left( x\right) =f^{\prime }\left( x\right) \theta +\psi \left( x\right)
+\varepsilon , \label{alternate response}
\end{equation}%
with the unknown contaminant $\psi $ ranging over a class $\Psi $ and
satisfying, for identifiability of $\theta $, the orthogonality condition 
\begin{equation}
\sum_{x\in \chi }f\left( x\right) \psi \left( x\right) =0_{p\times 1},
\label{orthogonality}
\end{equation}%
as well as a bound%
\begin{equation}
\sum_{x\in \chi }\psi ^{2}\left( x\right) \leq \tau ^{2}.  \label{tau-bound}
\end{equation}%
For designs $\xi $ placing mass $\xi _{i}$ on $x_{i}\in \chi $, he took $%
\hat{\theta}=\hat{\theta}_{\text{\textsc{ols}}}$, loss function \textsc{imspe%
}: 
\begin{equation*}
\mathcal{I}\left( \psi ,\xi \right) =\sum_{x\in \chi }E[f^{\prime }\left(
x\right) \hat{\theta}-E\{Y\left( x\right) \}]^{2},
\end{equation*}%
and found designs minimizing the maximum, over $\psi $, of $\mathcal{I}%
\left( \psi ,\xi \right) $.

In \cite{w18} the random errors $\varepsilon _{i}$ were assumed to be
i.i.d.; now suppose that they instead have covariance matrix $C\in \mathcal{C%
}_{M}$ and take $\hat{\theta}=\hat{\theta}_{\text{\textsc{gls}}}$ with
precision matrix $P$. Using (\ref{orthogonality}), and emphasizing the
dependence on $C$ and $P$, $\mathcal{I}\left( \psi ,\xi \right) $ decomposes
as 
\begin{equation}
\mathcal{I}\left( \psi ,\xi \left\vert {}\right. C,P\right) =\sum_{x\in \chi
}f^{\prime }\left( x\right) \text{\textsc{cov}}\left( \hat{\theta}\left\vert
{}\right. C,P\right)f\left( x\right) +\sum_{x\in \chi }f^{\prime }\left(
x\right) b_{\psi ,P}b_{\psi ,P}^{\prime }f\left( x\right) +\sum_{x\in \chi
}\psi ^{2}\left( x\right) .  \label{I-cond0}
\end{equation}%
Here $b_{\psi ,P}=E(\hat{\theta})-\theta $ is the bias. Denote by $\psi _{X}$
the $n\times 1$ vector consisting of the values of $\psi $ corresponding to
the rows of $X$, so that $b_{\psi ,P}=\left( X^{\prime }PX\right)
^{-1}X^{\prime }P\psi _{X}$.

To express these quantities in terms of the design, define a set of $n\times
N$ indicator matrices 
\begin{equation*}
\mathcal{J}=\left\{ J\in \left\{ 0,1\right\} ^{n\times N}\left\vert
{}\right. J^{\prime }J\text{ is diagonal with trace }n\right\} .
\end{equation*}%
There is a one-one correspondence between $\mathcal{J}$ and the set of $n$%
-point designs on $\chi $. Given $J$, with 
\begin{equation*}
J^{\prime }J\equiv D=diag\left( n_{1},...,n_{N}\right) ,
\end{equation*}%
the $j$th column of $J$ contains $n_{j}$ ones, specifying the number of
occurrences of $x_{j}$ in a design, which thus has design vector $\xi
=n^{-1}J^{\prime }1_{n}=\left( n_{1}/n,...,n_{N}/n\right) ^{\prime }$.
Conversely, a design determines $J$ by $J_{ij}=I\left( \text{the }i\text{th
row of }X\text{ is }f^{\prime }\left( x_{j}\right) \right) $. The rank $q$
of $D$ is the number of support points of the design, assumed $\geq p$.

Define $F_{N\times p}$ to be the matrix with rows $\left\{ f^{\prime }\left(
x_{i}\right) \right\} _{i=1}^{N}$. Then $X=JF$ and, correspondingly, $\psi
_{X}=J\bar{\psi}$ for $\bar{\psi}=\left( \psi \left( x_{1}\right) ,...,\psi
\left( x_{N}\right) \right) ^{\prime }$.

The proofs for this section are in the appendix.

\begin{theorem}
\label{thm: maxima}For $\eta $ as at (\ref{eta-bound}) and $\tau $ as at (%
\ref{tau-bound}), define $\nu =\tau ^{2}/\left( \tau ^{2}+\eta ^{2}\right) $%
; this is the relative importance to the investigator of errors due to bias
rather than to variation. Then for a design $\xi $ and precision matrix $P$,
the maximum of $\mathcal{I}\left( \psi ,\xi \left\vert {}\right. C,P\right) $
as $C$ varies over $\mathcal{C}_{M}$ and $\psi $ varies over $\Psi $ is $%
\left( \tau ^{2}+\eta ^{2}\right) \times $ 
\begin{equation}
\mathcal{I}_{\nu }\left( \xi ,P\right) =\left( 1-\nu \right) \mathcal{I}%
_{0}\left( \xi ,P\right) +\nu \mathcal{I}_{1}\left( \xi ,P\right) ,
\label{max loss}
\end{equation}%
where 
\begin{subequations}%
\label{max losses}%
\begin{eqnarray}
\mathcal{I}_{0}\left( \xi ,P\right) &=&tr\left\{ \left( Q^{\prime }UQ\right)
^{-1}\left( Q^{\prime }VQ\right) \left( Q^{\prime }UQ\right) ^{-1}\right\} ,
\label{max var} \\
\mathcal{I}_{1}\left( \xi ,P\right) &=&ch_{\max }\left\{ \left( Q^{\prime
}UQ\right) ^{-1}Q^{\prime }U^{2}Q\left( Q^{\prime }UQ\right) ^{-1}\right\} ,
\label{max bias} \\
U_{N\times N} &=&J^{\prime }PJ\text{\ and }V_{N\times N}=J^{\prime }P^{2}J. 
\notag
\end{eqnarray}%
\end{subequations}%
Here the columns of $Q_{N\times p}$ form an orthogonal basis for the column
space $\func{col}\left( F\right) $, $J$ is the indicator matrix of the
design $\xi $, and $ch_{\max }$ denotes the maximum eigenvalue of a matrix.
\end{theorem}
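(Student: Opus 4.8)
The plan is to exploit the fact that the three summands in the decomposition (\ref{I-cond0}) separate cleanly: the first (variance) term depends on $C$ but not on $\psi$, while the second (bias) term and the third term depend on $\psi$ but not on $C$. Since $C$ and $\psi$ range independently over $\mathcal{C}_M$ and $\Psi$, the joint maximum is the sum of the two separate maxima, and I would treat these in turn.

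For the variance term, first rewrite $\sum_{x\in\chi}f'(x)\text{\textsc{cov}}(\hat\theta\mid C,P)f(x)=tr\{\text{\textsc{cov}}(\hat\theta\mid C,P)\,F'F\}$, using $\sum_{x\in\chi}f(x)f'(x)=F'F$. Since $F'F\succeq 0$, this has the form $\Phi(\Sigma)=tr(K\Sigma)$ of criterion (iv) and so is non-decreasing in the Loewner ordering of $\text{\textsc{cov}}(\hat\theta\mid C,P)$, which is itself non-decreasing in $C$. Lemma 1(i) then places the maximum over $\mathcal{C}_M$ at $C=\eta^2 I_n$, where $\text{\textsc{cov}}(\hat\theta\mid\eta^2 I_n,P)=\eta^2(X'PX)^{-1}X'P^2X(X'PX)^{-1}$. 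Substituting $X=JF$ gives $X'PX=F'UF$ and $X'P^2X=F'VF$; writing $F=QR$ for the invertible $p\times p$ matrix $R$ determined by the orthonormal basis $Q$ of $\mathrm{col}(F)$, and using $F'F=R'R$, the factors $R$ cancel under the cyclic property of the trace and the maximal variance term collapses to $\eta^2\mathcal{I}_0(\xi,P)$.

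For the remaining terms, write $b_{\psi,P}=(F'UF)^{-1}F'U\bar\psi$ (from $\psi_X=J\bar\psi$), so that the bias term plus $\sum_{x\in\chi}\psi^2(x)=\|\bar\psi\|^2$ equals $\bar\psi'(A+I_N)\bar\psi$, where $A=UF(F'UF)^{-1}F'F(F'UF)^{-1}F'U$; the same $F=QR$ substitution simplifies this to $A=UQ(Q'UQ)^{-2}Q'U$. The orthogonality condition (\ref{orthogonality}) reads $F'\bar\psi=0$, i.e.\ $\bar\psi\in\mathrm{col}(Q)^\perp$, and since $A+I_N\succ 0$ the quadratic form increases along every ray, so the maximum over the ball (\ref{tau-bound}) is attained on its boundary. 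Choosing $Q_\perp$ with orthonormal columns spanning $\mathrm{col}(Q)^\perp$ and writing $\bar\psi=Q_\perp z$, this maximum is $\tau^2\,ch_{\max}\{Q_\perp'(A+I_N)Q_\perp\}=\tau^2\{1+ch_{\max}(Q_\perp'AQ_\perp)\}$.

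The crux, and the step I expect to be the main obstacle, is identifying $1+ch_{\max}(Q_\perp'AQ_\perp)$ with $\mathcal{I}_1(\xi,P)$. Here I would set $W=[Q\,:\,Q_\perp]$, orthogonal, so that $W'UW$ has blocks $G:=Q'UQ$, $B:=Q_\perp'UQ$ and $E:=Q_\perp'UQ_\perp$, and crucially $W'U^2W=(W'UW)^2$, whose leading block gives $Q'U^2Q=G^2+B'B$. Then $\mathcal{I}_1=ch_{\max}\{G^{-1}(G^2+B'B)G^{-1}\}=1+ch_{\max}(G^{-1}B'BG^{-1})$, while $Q_\perp'AQ_\perp=BG^{-2}B'$. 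Writing $S=BG^{-1}$, the matrices $G^{-1}B'BG^{-1}=S'S$ and $BG^{-2}B'=SS'$ share their nonzero eigenvalues, so their maximal eigenvalues coincide and the identification follows. Adding the two maxima yields $\eta^2\mathcal{I}_0+\tau^2\mathcal{I}_1$; factoring out $\tau^2+\eta^2$ and using $\nu=\tau^2/(\tau^2+\eta^2)$ rewrites this as $(\tau^2+\eta^2)\mathcal{I}_\nu(\xi,P)$, as claimed. Throughout, invertibility of $Q'UQ=(JQ)'P(JQ)$ follows from the assumed existence of the \textsc{gls} estimate, i.e.\ full rank of $X'PX=F'UF$.
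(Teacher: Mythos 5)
Your proof is correct and follows essentially the same route as the paper's: the same separation of the $C$- and $\psi$-maximizations, the same reduction of the variance term to $C=\eta^{2}I_{n}$ via Lemma 1 with $K=F^{\prime}F$, the same boundary parametrization $\bar{\psi}=\tau Q_{\perp}z$, and the same ``$ch_{\max}\left(SS^{\prime}\right)=ch_{\max}\left(S^{\prime}S\right)$'' trick for the bias term. Your block-matrix identity $W^{\prime}U^{2}W=\left(W^{\prime}UW\right)^{2}$, whose leading block gives $Q^{\prime}U^{2}Q=G^{2}+B^{\prime}B$, is just the paper's completeness relation $Q_{\ast}Q_{\ast}^{\prime}=I_{N}-QQ^{\prime}$ in disguise, so the crucial identification of the maximal bias with $\mathcal{I}_{1}\left(\xi,P\right)$ is the identical computation.
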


\begin{remark}
We assume throughout that the design is such that $X^{\prime }PX\succ 0$,
implying that $Q^{\prime }UQ\succ 0$.
\end{remark}

\begin{remark}
An investigator might decide beforehand to use \textsc{ols}, and then design
to minimize $\mathcal{I}_{\nu }\left( \xi ,P\right) =\mathcal{I}_{\nu
}\left( \xi ,I_{n}\right) $. This is a well-studied problem, solved for
numerous response models under the assumption of i.i.d. errors -- see \cite{w15} 
for a review. By virtue of Theorem \ref{thm: maxima} these designs
enjoy the additional property of being minimax against departures $C\in 
\mathcal{C}_{M}$.
\end{remark}

\subsection{Simulations}

\begin{table}[tbp]\centering%
\begin{tabular}{ccccccccc}
\multicolumn{9}{c}{Table 1. \ Minimax precision matrices; multinomial
designs:} \\ 
\multicolumn{9}{c}{means of performance measures $\pm $ $1$ standard error.}
\\ \hline\hline
Response & $N$ & $\nu $ & $\%I_{n}$ & $\mathcal{I}_{\nu }\left( \xi
,I_{n}\right) $ & $\mathcal{I}_{\nu }\left( \xi ,P^{\nu }\right) $ & $%
T_{1}\left( \%\right) $ & $T_{2}\left( \%\right) $ & $T_{3}\left( \%\right) $
\\ \hline
& $11$ & $.5$ & $1$ & $3.34\pm .11$ & $3.19\pm .11$ & $4.16\pm .15$ & $%
3.23\pm .12$ & $12.29\pm .46$ \\ 
linear & $11$ & $1$ & $1$ & $3.72\pm .16$ & $3.27\pm .14$ & $11.81\pm .35$ & 
$9.18\pm .31$ & $14.40\pm .52$ \\ 
$n=10$ & $51$ & $.5$ & $27$ & $11.19\pm .21$ & $11.05\pm .21$ & $1.24\pm .07$
& $.85\pm .05$ & $4.10\pm .22$ \\ 
& $51$ & $1$ & $27$ & $9.80\pm .23$ & $9.35\pm .22$ & $4.34\pm .22$ & $%
2.96\pm .16$ & $4.82\pm .26$ \\ \hline
& $11$ & $.5$ & $0$ & $5.99\pm 1.25$ & $5.57\pm 1.06$ & $4.62\pm .15$ & $%
3.15\pm .11$ & $14.16\pm .50$ \\ 
quadratic & $11$ & $1$ & $0$ & $7.30\pm 1.82$ & $6.07\pm 1.27$ & $13.04\pm
.38$ & $8.57\pm .38$ & $16.22\pm .57$ \\ 
$n=15$ & $51$ & $.5$ & $4$ & $12.61\pm .46$ & $12.40\pm .45$ & $1.58\pm .07$
& $.99\pm .05$ & $5.75\pm .27$ \\ 
& $51$ & $1$ & $4$ & $10.69\pm .54$ & $10.03\pm .51$ & $5.95\pm .24$ & $%
3.54\pm .17$ & $6.72\pm .31$ \\ \hline
& $11$ & $.5$ & $0$ & $9.71\pm 1.63$ & $9.15\pm 1.52$ & $4.87\pm .15$ & $%
3.25\pm .11$ & $14.90\pm .51$ \\ 
cubic & $11$ & $1$ & $0$ & $12.98\pm 2.53$ & $11.41\pm 2.22$ & $13.54\pm .38$
& $8.86\pm .29$ & $16.92\pm .58$ \\ 
$n=20$ & $51$ & $.5$ & $0$ & $21.67\pm 2.43$ & $21.21\pm 2.38$ & $1.87\pm
.08 $ & $1.14\pm .05$ & $7.24\pm .30$ \\ 
& $51$ & $1$ & $0$ & $20.76\pm 2.9$ & $19.29\pm 2.81$ & $7.39\pm .26$ & $%
3.75\pm .16$ & $8.45\pm .35$ \\ \hline\hline
\end{tabular}%
\end{table}%

\begin{table}[tbp] \centering%
\begin{tabular}{ccccccccc}
\multicolumn{9}{c}{Table 2. \ Minimax precision matrices; symmetrized
designs:} \\ 
\multicolumn{9}{c}{means of performance measures $\pm $ $1$ standard error.}
\\ \hline\hline
Response & $N$ & $\nu $ & $\%I_{n}$ & $\mathcal{I}_{\nu }\left( \xi
,I_{n}\right) $ & $\mathcal{I}_{\nu }\left( \xi ,P^{\nu }\right) $ & $%
T_{1}\left( \%\right) $ & $T_{2}\left( \%\right) $ & $T_{3}\left( \%\right) $
\\ \hline
& $11$ & $.5$ & $20$ & $2.10\pm .03$ & $2.05\pm .03$ & $2.45\pm .11$ & $%
1.64\pm .07$ & $8.46\pm .41$ \\ 
linear & $11$ & $1$ & $20$ & $1.83\pm .04$ & $1.66\pm .04$ & $8.51\pm .31$ & 
$4.99\pm .21$ & $10.06\pm .46$ \\ 
$n=10$ & $51$ & $.5$ & $80$ & $10.01\pm .29$ & $9.97\pm .29$ & $.40\pm .05$
& $.23\pm .03$ & $1.45\pm .18$ \\ 
& $51$ & $1$ & $80$ & $7.77\pm .29$ & $7.67\pm .29$ & $1.48\pm .16$ & $%
.67\pm .07$ & $1.66\pm .20$ \\ \hline
& $11$ & $.5$ & $0$ & $2.35\pm .08$ & $2.26\pm .07$ & $3.49\pm .07$ & $%
2.17\pm .07$ & $14.10\pm .50$ \\ 
quadratic & $11$ & $1$ & $0$ & $2.01\pm .09$ & $1.74\pm .08$ & $14.40\pm .37$
& $8.57\pm .22$ & $18.05\pm .58$ \\ 
$n=15$ & $51$ & $.5$ & $85$ & $10.58\pm .70$ & $10.53\pm .68$ & $.25\pm .04$
& $.15\pm .02$ & $1.02\pm .16$ \\ 
& $51$ & $1$ & $85$ & $7.54\pm .80$ & $7.39\pm .74$ & $1.03\pm .15$ & $%
.49\pm .07$ & $1.19\pm .19$ \\ \hline
& $11$ & $.5$ & $3$ & $2.64\pm .19$ & $2.55\pm .19$ & $3.56\pm .12$ & $%
1.99\pm .07$ & $15.18\pm .56$ \\ 
cubic & $11$ & $1$ & $3$ & $2.39\pm .27$ & $2.11\pm .26$ & $15.19\pm .44$ & $%
9.09\pm .28$ & $19.69\pm .70$ \\ 
$n=20$ & $51$ & $.5$ & $58$ & $11.97\pm 1.91$ & $11.80\pm 1.80$ & $.45\pm
.04 $ & $.24\pm .02$ & $2.11\pm .19$ \\ 
& $51$ & $1$ & $58$ & $8.44\pm 2.14$ & $7.94\pm 1.80$ & $2.23\pm .18$ & $%
.86\pm .07$ & $2.47\pm .21$ \\ \hline\hline
\end{tabular}%
\end{table}%

In (\ref{max loss}), \textsc{var} $=$ $\mathcal{I}_{0}$ and \textsc{bias} $=$
$\mathcal{I}_{1}$ are the components of the \textsc{imspe} due to variation
and to bias, respectively. That $\mathcal{I}_{0}\left( \xi ,P\right) $ is
minimized by \textsc{ols} for any design was established in \S \ref{sec: gls}%
. If \textsc{ols} is to be a minimax procedure for a particular design and
some $\nu \in (0,1]$, any increase in \textsc{bias} must be outweighed by a
proportional decrease in \textsc{var}. We shall present theoretical and
numerical evidence that for many designs this is not the case, and for these
pairs $\left( \xi ,\nu \right) $ \textsc{ols} is not minimax.

We first present the results of a simulation study, in which the designs
exhibit no particular structure. To find $P$ minimizing (\ref{max loss}) we
use the fact that, by virtue of the Choleski decomposition, any positive
definite matrix can be represented as $P=LL^{\prime }$, for a lower
triangular $L$. We thus express $\mathcal{I}_{\nu }\left( \xi ,LL^{\prime
}\right) $ as a function of the vector $l_{n\left( n+1\right) /2\times 1}$
consisting of the elements in the lower triangle of $L$, and minimize over $%
l $ using a nonlinear constrained minimizer. The constraint -- recall (\ref%
{constraint}) -- is that $l^{\prime }l=n$.

Of course we cannot guarantee that this yields an absolute minimum, but the
numerical evidence is compelling. In any event, the numerical results give a
negative answer to the question of whether or not\ \textsc{ols} is
necessarily minimax -- the minimizing $P$ is often, but not always, the
identity matrix.

In our simulation study we set the design space to be $\chi =\left\{
-1=x_{1}<\cdot \cdot \cdot <x_{N}=1\right\} $, with the $x_{i}$ equally
spaced. We chose regressors $f\left( x\right) =\left( 1,x\right) ^{\prime }$%
, $\left( 1,x,x^{2}\right) ^{\prime }$ or $\left( 1,x,x^{2},x^{3}\right)
^{\prime }$, corresponding to linear, quadratic, or cubic regression. For
various values of $n$ and $N$ we first randomly generated probability
distributions $\left( p_{1},p_{2},..,p_{N}\right) $ and then generated a
multinomial$\left( n;p_{1},p_{2},..,p_{N}\right) $ vector; this is $n\xi $.
For each such design we computed the minimizing $P$, and both components of
the minimized value of $\mathcal{I}_{\nu }\left( \xi ,P\right) $. This was
done for $\nu =.5,1$. We took $n$ equal to five times the number of
regression parameters. Denote by $P^{\nu }$ the minimizing $P$. Of course $%
P^{0}=I_{n}$. In each case we compared three quantities:%
\begin{eqnarray*}
T_{1} &=&100\frac{\left( \mathcal{I}_{\nu }\left( \xi ,P^{0}\right) -%
\mathcal{I}_{\nu }\left( \xi ,P^{\nu }\right) \right) }{\mathcal{I}_{\nu
}\left( \xi ,P^{0}\right) }\text{, the percent reduction in }\mathcal{I}%
_{\nu }\text{ achieved by }P^{\nu }; \\
T_{2} &=&100\frac{\left( \mathcal{I}_{0}\left( \xi ,P^{\nu }\right) -%
\mathcal{I}_{0}\left( \xi ,P^{0}\right) \right) }{\mathcal{I}_{0}\left( \xi
,P^{0}\right) }\text{, the percent increase, relative to \textsc{ols}, in 
\textsc{var};} \\
T_{3} &=&100\frac{\left( \mathcal{I}_{1}\left( \xi ,P^{0}\right) -\mathcal{I}%
_{1}\left( \xi ,P^{\nu }\right) \right) }{\mathcal{I}_{1}\left( \xi
,P^{0}\right) }\text{, the percent decrease, relative to \textsc{ols}, in 
\textsc{bias}.}
\end{eqnarray*}

The means, and standard errors based on 500 runs, of the performance
measures using these `multinomial' designs are given in Table 1.\textbf{\ }%
The percentages of times that $P^{\nu }=I_{n}$ was minimax are also given.
When $\nu =1$ the percent reduction in the bias ($T_{3}$) can be
significant, but is accompanied by an often sizeable increase in the
variance ($T_{2}$). When $\nu =.5$ the reduction $T_{1}$ is typically quite
modest.

These multinomial designs, mimicking those which might arise in
observational studies, are not required to be symmetric. We re-ran the
simulations after symmetrizing the designs by averaging them with their
reflections across $x=0$ and then applying a rounding mechanism which
preserved symmetry. The resulting designs gave substantially reduced losses
both for $P=I_{n}$ (\textsc{ols}) and $P=P^{\nu }$ (\textsc{gls}), and were
much more likely to be optimized by $P^{\nu }=$ $I_{n}$. The differences
between the means of $\mathcal{I}_{\nu }\left( \xi ,I_{n}\right) $ and $%
\mathcal{I}_{\nu }\left( \xi ,P^{\nu }\right) $ were generally statistically
insignificant, and the values of $T_{1}$, $T_{2}$ and $T_{3}$ showed only
very modest benefits to \textsc{gls}. See Table 2.

\begin{remark}From the simulations a user might conclude that even when \textsc{ols} is not minimax, the benefits
of using \textsc{gls} with the minimax $P$ are outweighed by the computational
complexity. An investigator who does decide beforehand to use \textsc{ols}, might then design
to minimize $\mathcal{I}_{\nu }\left( \xi ,P\right) =\mathcal{I}_{\nu
}\left( \xi ,I_{n}\right) $. This is a well-studied problem, solved for
numerous response models under the assumption of i.i.d. errors -- see Wiens
(2015) for a review. We now see that these designs enjoy the additional
property of being minimax against departures $C\in \mathcal{C}_{M}$.
\end{remark}

\subsection{Theoretical complements}

In Theorem \ref{thm: cases} below, we show that the experimenter can often
design in such a way that $P=I_{n}$ is a minimax precision matrix, so that 
\textsc{ols} is a minimax procedure. In particular, this holds if the design
is \textit{uniform} on its support, i.e.\ places an equal number of
observations at each of several points of the design space.

Suppose that a design $\xi $ places $n_{i}\geq 0$ observations at $x_{i}\in
\chi $. Let $J_{+}:n\times q$ be the result of retaining only the non-zero
columns of $J$, so that $JJ^{\prime }=J_{+}J_{+}^{\prime }$, and $%
D_{+}\equiv J_{+}^{\prime }J_{+}\ $is the diagonal matrix containing the
positive $n_{i}$. If the columns removed have labels $j_{1},...,j_{N-q}$
then let $Q_{+}:q\times p$ be the result of removing these rows from $Q$, so
that $JQ=J_{+}Q_{+}$ and $Q^{\prime }DQ=Q_{+}^{\prime }D_{+}Q_{+}$. Now
define $\alpha =n/tr\left( D_{+}^{-1}\right) $ and 
\begin{equation}
P_{0}=\alpha J_{+}D_{+}^{-2}J_{+}^{\prime },  \label{P0}
\end{equation}%
with $trP_{0}=n$. Note that 
\begin{equation*}
rk\left( P_{0}\right) =rk\left( J_{+}D_{+}^{-1}\right) =rk\left(
D_{+}^{-1}J_{+}^{\prime }J_{+}D_{+}^{-1}\right) =rk\left( D_{+}^{-1}\right)
=q,
\end{equation*}%
so that $P_{0}$ is positive definite iff $q=n$. This is relevant in part
(ii) of Theorem \ref{thm: cases}, where we deal with the possible rank
deficiency of $P_{0}$ by introducing 
\begin{equation}
P_{\varepsilon }\equiv \left( P_{0}+\varepsilon I_{n}\right) /\left(
1+\varepsilon \right) ;  \label{Peps}
\end{equation}%
for $\varepsilon >0$, $P_{\varepsilon }$ is positive definite with $tr\left(
P_{\varepsilon }\right) =$ $n$.

\begin{theorem}
\label{thm: cases} (i) Suppose that $q\leq N$ and the design is uniform on $%
q $ points of $\chi $, with $k\geq 1$ observations at each $x_{i}$. Then $%
n=kq$, $D_{+}=kI_{q}$, and $P=I_{n}$ is a minimax precision matrix: 
\begin{equation}
\mathcal{I}_{\nu }\left( \xi ,I_{n}\right) =\min_{P\succ 0}\mathcal{I}_{\nu
}\left( \xi ,P\right) ;  \label{minimaxity}
\end{equation}%
thus \textsc{ols} is minimax within the class of \textsc{gls} methods. In
particular this holds if $P_{0}=$ $I_{n}$, where $P_{0}$ is defined at (\ref%
{P0}). \newline
(ii) \ Suppose that a design $\xi $ places mass on $q\leq N$ points of $\chi 
$, that $P_{0}\neq $ $I_{n}$, and that neither of the following holds: 
\begin{subequations}%
\label{Q-cond} 
\begin{eqnarray}
\left( Q_{+}^{\prime }Q_{+}\right) ^{-1}\left( Q_{+}^{\prime
}D_{+}^{-1}Q_{+}\right) \left( Q_{+}^{\prime }Q_{+}\right) ^{-1} &=&\left(
Q_{+}^{\prime }D_{+}Q_{+}\right) ^{-1},  \label{Q-inverse} \\
ch_{\max }\left\{ \left( Q_{+}^{\prime }D_{+}Q_{+}\right) ^{-1}\left(
Q_{+}^{\prime }D_{+}^{2}Q_{+}\right) \left( Q_{+}^{\prime }D_{+}Q_{+}\right)
^{-1}\right\} &=&ch_{\max }\left\{ \left( Q_{+}^{\prime }Q_{+}\right)
^{-1}\right\} .  \label{Q-square}
\end{eqnarray}%
\end{subequations}%
Then in particular $D_{+}$ is not a multiple of $I_{q}$ and so the design is
non-uniform. With $P_{\varepsilon }$ as defined at (\ref{Peps}), there is $%
\nu _{0}\in \left( 0,1\right) $ for which, for each $\nu \in (\nu _{0},1]$, $%
\mathcal{I}_{\nu }\left( \xi ,P_{\varepsilon }\right) <\mathcal{I}_{\nu
}\left( \xi ,I_{n}\right) $. Thus \textsc{ols} is not minimax for such $%
\left( \xi ,\nu \right) $.
\end{theorem}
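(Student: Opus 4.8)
The plan is to analyze the two components $\mathcal{I}_0(\xi,P)$ and $\mathcal{I}_1(\xi,P)$ from \eqref{max losses} separately, exploiting the structure that a precision matrix $P$ enters only through $U=J'PJ$ and $V=J'P^2 J$. For part (i), I would first record that $\mathcal{I}_0(\xi,P)$ is, by the Gauss-Markov argument already given in Section \ref{sec: gls}, minimized over all $P\succ 0$ by $P=I_n$ for \emph{every} design. So the entire task reduces to showing that the bias component $\mathcal{I}_1(\xi,P)$ is \emph{also} minimized at $P=I_n$ when the design is uniform, since then $P=I_n$ minimizes each of the two convex-combination terms simultaneously and hence minimizes $\mathcal{I}_\nu$ for all $\nu\in(0,1]$. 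Under uniformity we have $D_+=kI_q$, so $J_+'J_+=kI_q$ and the columns of $J_+$ are orthogonal with common squared length $k$; substituting $x=J_+ z$ (the image of $J_+$) I would rewrite $U$ and $V$ restricted to $\operatorname{col}(F)$ in terms of $Q_+$ and show that the Rayleigh quotient defining $ch_{\max}$ in \eqref{max bias} is bounded below by its value at $P=I_n$. The clean way is to note that $P_0$ in \eqref{P0} becomes $\alpha J_+ D_+^{-2} J_+' = \alpha k^{-2} J_+ J_+'$ with $\alpha=n/\operatorname{tr}(D_+^{-1})=n k/q = k^2$, so $P_0=J_+ J_+'/k$ is the orthogonal projection onto $\operatorname{col}(J_+)$ scaled appropriately; when additionally $q=n$ this collapses to $I_n$, giving the ``in particular'' clause, and in general one verifies that $I_n$ already attains the minimum of $\mathcal{I}_1$ among all $P$.

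For part (ii) the strategy is a perturbation argument: starting from $P=I_n$, I move infinitesimally toward $P_0$ along the segment $P_\varepsilon$ of \eqref{Peps} and show that for $\nu$ near $1$ the directional derivative of $\mathcal{I}_\nu$ is negative. The key computation is that $P_0$ is the precision matrix that makes the \textsc{gls} estimate reduce to an \emph{average-within-support-point} estimator, for which $Q'UQ$, $Q'VQ$, and $Q'U^2Q$ simplify to expressions in $Q_+'D_+ Q_+$, $Q_+'Q_+$, and $Q_+'D_+^2 Q_+$ respectively. Concretely, evaluating \eqref{max var}--\eqref{max bias} at $P=P_0$ yields exactly the left-hand sides of \eqref{Q-inverse} and \eqref{Q-square}, while evaluating at $P=I_n$ (where $U=V=J'J=D$ and the reduction is to $Q_+'D_+ Q_+$ versus $Q_+'Q_+$) yields the right-hand sides. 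The two hypothesized failures in \eqref{Q-cond} are therefore precisely the statements ``$P_0$ does no better than $I_n$ on the variance term'' and ``$P_0$ does no better than $I_n$ on the bias term''; assuming \emph{neither} holds means $P_0$ strictly improves on $I_n$ in at least one component, and since $\mathcal{I}_0$ is globally minimized at $I_n$ the strict improvement must come from the bias term $\mathcal{I}_1$. Because $\mathcal{I}_\nu=(1-\nu)\mathcal{I}_0+\nu\mathcal{I}_1$ and the bias gain is strict while the variance loss is finite, there is a threshold $\nu_0<1$ past which the weighted combination strictly decreases, establishing $\mathcal{I}_\nu(\xi,P_\varepsilon)<\mathcal{I}_\nu(\xi,I_n)$ for $\nu\in(\nu_0,1]$.

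I anticipate two technical points. First, one must justify using $P_\varepsilon$ rather than $P_0$ itself: $P_0$ is only rank $q$ and hence singular unless $q=n$, violating the standing positive-definiteness requirement in Remark 2, so the inequality must be obtained at $P_0$ (as a limit) and then transferred to $P_\varepsilon$ for small $\varepsilon>0$ by continuity of $\mathcal{I}_\nu$ in $P$. I would argue that $\mathcal{I}_\nu(\xi,P_\varepsilon)\to\mathcal{I}_\nu(\xi,P_0)$ as $\varepsilon\downarrow 0$, so the strict inequality at $P_0$ persists for sufficiently small $\varepsilon$. Second, one must confirm that negating \eqref{Q-cond} forces non-uniformity of the design, i.e.\ that $D_+$ is not a scalar multiple of $I_q$: this is the contrapositive of part (i), since if $D_+=kI_q$ then both \eqref{Q-inverse} and \eqref{Q-square} reduce to identities between $Q_+'Q_+$-type matrices and hold automatically. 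The main obstacle will be the bias-term reduction: verifying that $Q'U^2Q$ and $Q'UQ$ evaluated at $P_0$ genuinely yield the clean forms $Q_+'D_+^2 Q_+$ and $Q_+'D_+ Q_+$ (and thus that \eqref{Q-square} is the exact equality-condition for no improvement) requires carefully tracking the identities $JQ=J_+Q_+$ and $Q'DQ=Q_+'D_+Q_+$ together with the defining relation $P_0 = \alpha J_+ D_+^{-2} J_+'$, and showing that the off-support directions contribute nothing to the relevant quadratic forms.
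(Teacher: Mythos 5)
Your proposal is correct and follows essentially the same route as the paper, which packages your key claims as its Lemma~\ref{thm: lemma}: the variance component $\mathcal{I}_{0}$ is globally minimized at $P=I_{n}$, the bias component $\mathcal{I}_{1}$ is globally bounded below by $ch_{\max }\{(Q_{+}^{\prime }Q_{+})^{-1}\}$ (its value at $P_{0}$, attained by $I_{n}$ exactly in the uniform case), the equalities \eqref{Q-inverse} and \eqref{Q-square} are precisely the conditions under which $P_{0}$ ties $I_{n}$ on variance and on bias respectively, and the conclusion for $\nu $ near $1$ follows from continuity along $P_{\varepsilon }$ together with the threshold $\nu _{0}=\Delta _{0}(\varepsilon )/\left( \Delta _{0}(\varepsilon )+\Delta _{1}(\varepsilon )\right) $. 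Your minor slips do not affect this architecture: under uniformity $\alpha =k^{2}$ gives $P_{0}=J_{+}J_{+}^{\prime }$ (your $J_{+}J_{+}^{\prime }/k$ has trace $n/k$, violating \eqref{constraint}); evaluating the bias term at $P_{0}$ gives the \emph{right}-hand side of \eqref{Q-square} and at $I_{n}$ the left-hand side, not the reverse; and failure of \eqref{Q-inverse} means $P_{0}$ is strictly \emph{worse} on variance rather than giving ``improvement in at least one component,'' so the correct statement is that failure of both equalities forces a strict variance penalty together with a strict bias gain, the sign of the bias gain resting on the same global lower bound you plan to prove in part (i).
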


\begin{remark}
The requirement of Theorem \ref{thm: cases}(ii) that (\ref{Q-inverse}) and (%
\ref{Q-square}) fail excludes more designs than those which are uniform on
their supports, and is a condition on $Q$ as well as on the design. For
instance if $Q_{+}\left( Q_{+}^{\prime }Q_{+}\right) ^{-1/2}\equiv
A_{q\times p}$ is block-diagonal: $A=\oplus _{i=1}^{m}A_{i}$, where $%
A_{i}:q_{i}\times p_{i}$ ($\sum q_{i}=q,\sum p_{i}=p$) satisfies $%
A_{i}^{\prime }A_{i}=I_{p_{i}}$, and if $D_{+}=$ $\oplus
_{i=1}^{m}k_{i}I_{q_{i}}$, then 
\begin{eqnarray}
A^{\prime }D_{+}^{-1}A &=&\left( A^{\prime }D_{+}A\right) ^{-1},
\label{new1} \\
\left( A^{\prime }D_{+}A\right) ^{-1}A^{\prime }D_{+}^{2}A\left( A^{\prime
}D_{+}A\right) ^{-1} &=&I_{p}.  \label{new2}
\end{eqnarray}%
Equation (\ref{new1}) gives (\ref{Q-inverse}), and (\ref{new2}) asserts the
equality of the two matrices in (\ref{Q-square}), hence of their maximum
eigenvalues. These equations are satisfied even though the design is
non-uniform if the $k_{i}$ are not all equal.
\end{remark}

In Tables 1 and 2, uniform designs account for $100\%$ and $95\%$,
respectively, of the cases in which $P^{\nu }=I_{n}$ is optimal. Common
exceptions in Table 2 are designs which are uniform apart from having points
added or removed at $x=0$ to maintain symmetry. Those designs for which $%
I_{n}$ is not optimal all meet the conditions of Theorem \ref{thm: cases}%
(ii). This was checked numerically: since (\ref{Q-inverse})\ implies that $%
\mathcal{I}_{0}\left( \xi ,P_{0}\right) -\mathcal{I}_{0}\left( \xi
,I_{n}\right) =0$, and (\ref{Q-square}) \ implies that $\mathcal{I}%
_{1}\left( \xi ,I_{n}\right) -\mathcal{I}_{1}\left( \xi ,P_{0}\right) =0$,
their failure is verified by checking that each of these differences is
positive.

\section{Minimax precision matrices and minimax designs\label{sec:
combination}}

\begin{table}[tbp] \centering%
\begin{tabular}{ccccccccc}
\multicolumn{9}{c}{Table 3. \ Minimax designs and precision matrices:} \\ 
\multicolumn{9}{c}{performance measures ($T_{1}=T_{2}=T_{3}=0$ if $P^{\nu
}=I_{n}$).} \\ \hline\hline
Response & $N$ & $\nu $ &  & $\mathcal{I}_{\nu }\left( \xi ,I_{n}\right) $ & 
$\mathcal{I}_{\nu }\left( \xi ,P^{\nu }\right) $ & $T_{1}(\%)$ & $T_{2}(\%)$
& $T_{3}(\%)$ \\ \hline
& $11$ & $.5$ &  & $1.60$ & $1.60$ & $0$ & $0$ & $0$ \\ 
linear & $11$ & $1$ &  & $1.10$ & $1.10$ & $0$ & $0$ & $0$ \\ 
$n=10$ & $51$ & $.5$ &  & $6.14$ & $6.14$ & $0$ & $0$ & $0$ \\ 
& $51$ & $1$ &  & $5.10$ & $5.10$ & $0$ & $0$ & $0$ \\ \hline
& $11$ & $.5$ &  & $1.61$ & $1.53$ & $4.67$ & $2.81$ & $16.06$ \\ 
quadratic & $11$ & $1$ &  & $1.12$ & $1.00$ & $10.84$ & $10.78$ & $10.84$ \\ 
$n=15$ & $51$ & $.5$ &  & $5.80$ & $5.80$ & $0$ & $0$ & $0$ \\ 
& $51$ & $1$ &  & $3.40$ & $3.40$ & $0$ & $0$ & $0$ \\ \hline
& $11$ & $.5$ &  & $1.55$ & $1.53$ & $1.46$ & $1.13$ & $6.04$ \\ 
cubic & $11$ & $1$ &  & $1.12$ & $1.00$ & $11.03$ & $4.84$ & $11.03$ \\ 
$n=20$ & $51$ & $.5$ &  & $5.56$ & $5.56$ & $0$ & $0$ & $0$ \\ 
& $51$ & $1$ &  & $2.55$ & $2.55$ & $0$ & $0$ & $0$ \\ \hline\hline
\end{tabular}%
\end{table}%
\begin{figure}[btp]
\centering
\includegraphics[scale=1]{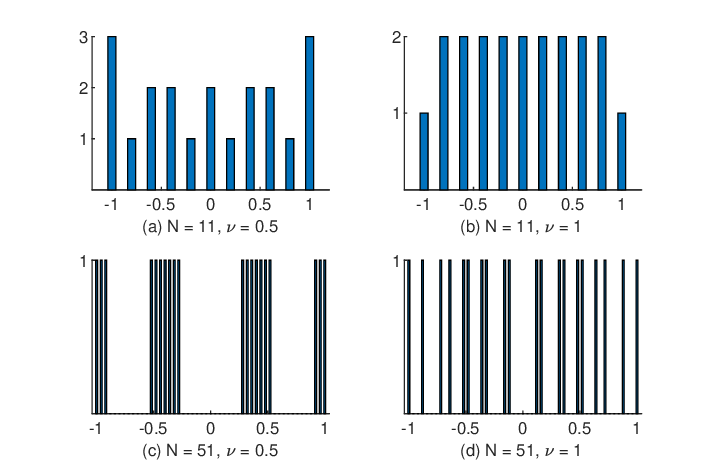}
\caption{Minimax design frequencies for a
cubic model; $n=20$.}
\label{fig: designs}
\end{figure}

We investigated the interplay between minimax precision matrices and minimax
designs. To this end (\ref{max loss}) was minimized over both $\xi $ and $P$%
. To minimize over $\xi $ we employed particle swarm optimization (\cite{ke95}). 
The algorithm searches over continuous designs $\xi $,
and so each such design to be evaluated was first rounded so that $n\xi $
had integer values. Then $J$, and the corresponding minimax precision matrix 
$P^{\nu }=P^{\nu }\left( J\right) $ were computed and the loss returned. The
final output is an optimal pair $\left\{ J^{\nu },P^{\nu }\right\} $. Using
a genetic algorithm yielded the same results but was many times slower.

The results, using the same parameters as in Tables 1 and 2, are shown in
Table 3. We note that in all cases the use of the minimax design gives
significantly smaller losses, both using \textsc{ols} and \textsc{gls}. In
eight of the twelve cases studied it turns out that the minimax design is
uniform on its support and so the choice $P^{\nu }=I_{n}$ is minimax. In the
remaining cases -- all in line with (iii) of Theorem \ref{thm: cases} --
minimax precision results in only a marginal improvement. Of the two factors
-- $\xi $ and $P$ -- explaining the decrease in $\mathcal{I}_{\nu }$, the
design is by far the greater contributor.

See Figure \ref{fig: designs} for some representative plots of the minimax
designs for a cubic response. These reflect several common features of
robust designs. One is that the designs using $\nu =1$, i.e.\ aimed at
minimization of the bias alone, tend to be more uniform than those using $%
\nu =.5$. This reflects the fact -- following from (\ref{orthogonality}) and
exploited in (i) of Theorem \ref{thm: cases} -- that when a uniform design
on all of $\chi $ is implemented, then the bias using \textsc{ols} vanishes.
As well, when the design space is sufficiently rich as to allow for clusters
of nearby design points to replace replicates, then this invariably occurs.
See \cite{w24b}, \cite{fw00} and \cite{hsw01} for examples
and discussions. Such clusters form near the support points of the
classically I-optimal designs, minimizing variance alone. See for instance
\cite{s77} who showed that the I-optimal design for cubic regression
places masses $.1545$, $.3455$ at each of $\pm 1$, $\pm .477$ - a situation
well approximated by the design in (c) of Figure \ref{fig: designs}, whose
clusters around these points account for masses of $.15$ and $.35$ each. As $%
\nu $ increases the clusters become more diffuse, as in d) of Figure \ref%
{fig: designs}. A result of our findings in this article is that an
additional benefit to such clustering is that it allows \textsc{ols} to be a
minimax \textsc{gls} procedure.

\section{Continuous design spaces}

If the design space is continuous -- an interval, or hypercube, for instance
-- then the problem of finding a minimax design is somewhat more
complicated. We continue to work with the alternate models (\ref{alternate
response}), but the constraints (\ref{orthogonality}) and (\ref{tau-bound})
now have their finite sums replaced by Lebesgue integrals over $\chi $. As
shown in \cite{w92}, in order that a design $\xi $ have finite maximum
loss it is necessary that it be absolutely continuous, i.e. possess a
density. Otherwise it places positive mass on some set of Lebesgue measure
zero, on which any $\psi \left( x\right) $ can be modified without affecting
these integrals. Such modifications can be done in such a way as to make the
integrated squared bias unbounded. Thus static, discrete designs are not admissible.

A remedy, detailed by \cite{ww22}, is to choose design points randomly, from an appropriate density. In
the parlance of game theory, this precludes Nature, assumed malevolent, from
knowing the support points of $\xi $ and replying with a $\psi \left( \cdot
\right) $ modified as above. They also recommend designs concentrated near
the I-optimal design points, leading to (c) and (d) of Figure \ref{fig:
designs}, but with randomly chosen clusters. The resulting designs are
always uniform on their supports, and so \textsc{ols} is minimax in all
cases, in contrast to the situation illustrated in (a) and (b) of Figure \ref%
{fig: designs}. See \cite{w24b} for guidelines and further examples, and \cite{wt24} for extensions allowing for randomized replication. For results on robustness of inferences, see \cite{zz}.

\appendix

\section*{Appendix: Derivations}

\setcounter{equation}{0} \renewcommand{\theequation}{A.\arabic{equation}}

\noindent \textit{Details for Remark 1:}
We take $\Phi \left( \Sigma \right) =\det\left( \Sigma \right)$ and consider the ratio $r(X,C_{0})$ of the maximum loss of the $\textsc{gls}$ estimate to that of the $\textsc{ols}$ estimate. This is
\begin{equation*} 
r(X,C_{0}) = \frac{\max_{\mathcal{C}_{M}}\mathcal{L}\left( C\left\vert {}\right. P_{0}\right)}  {\max_{\mathcal{C}_{M}}\mathcal{L}\left( C\left\vert {}\right. I_{n}\right)}
=\frac{\mathcal{L}\left( \eta^{2} I_{n}  \left\vert {}\right. P_{0}\right)}
{\mathcal{L}\left( \eta^{2} I_{n} \left\vert {}\right. I_{n}\right)}
= \frac{| X^{\prime }C_{0}^{-2}X||X^{\prime}X|}{| X^{\prime }C_{0}^{-1}X|^{2}} 
= \frac{| Q^{\prime }C_{0}^{-2}Q|}{| Q^{\prime }C_{0}^{-1}Q|^{2}},
\end{equation*}
where the final term employs the QR-decomposition of $X$. Of course $r(X,C_{0}) \ge 1$; a direct proof follows from the observation that
\begin{equation*}
Q^{\prime }C_{0}^{-2}Q - \left(Q^{\prime }C_{0}^{-1}Q\right)^{2} =
Q^{\prime }C_{0}^{-1}\left\{I_{n} - Q Q^{\prime}\right\} C_{0}^{-1}Q
\end{equation*}
is p.s.d since the matrix in braces is idempotent, hence p.s.d.

For the equicorrelation model with $\rho >0$ and $C_{0}=\left( 1-\rho \right) \left(
I_{n}+\alpha 1_{n}1_{n}^{\prime }\right) $ for $\alpha =\rho
/\left( 1-\rho \right) \in \left( 0,\infty \right) $, we calculate that%
\begin{eqnarray*}
C_{0}^{-1} &=&\left( 1-\rho \right) ^{-1}\left( I_{n}-\beta 1_{n}1_{n}^{\prime
}\right) \text{ for }\beta =\alpha /\left( 1+n\alpha \right) \in \left(
0,n^{-1}\right), \\
C_{0}^{-2} &=&\left( 1-\rho \right) ^{-2}\left( I_{n}-\gamma 1_{n}1_{n}^{T
}\right) \text{ for }\gamma =2\beta - n\beta ^{2}\in \left(0,n^{-1}\right),
\end{eqnarray*}%
and then   
\begin{equation*}
r\left( X,C_{0}\right) =1+\frac{S\beta ^{2}\left( n-S\right) }{\left(
1-S\beta \right) ^{2}},
\end{equation*}%
for $S=1_{n}^{\prime }QQ^{\prime }1_{n}\in \left[ 0,n\right] $. 

For $0<\varepsilon <n^{-1}$ suppose that $\rho $ is sufficiently large that $\beta
=n^{-1}-\varepsilon $, and that $S=n-\varepsilon $. Then 
\begin{equation*}
r\left( X,C_{0}\right) =1+\frac{\phi _{n}\left( \varepsilon \right) }{%
\varepsilon } \text{ for }\phi _{n}\left( \varepsilon \right) =\frac{\left(
n-\varepsilon \right) \left( n^{-1}-\varepsilon \right) ^{2}}{\left(
n+n^{-1}-\varepsilon \right) ^{2}}>0\text{ },
\end{equation*}%
and $r\left( X,C_{0}\right) \rightarrow \infty $ as $\varepsilon \rightarrow
0$ and $\phi _{n}\left( \varepsilon \right)  \rightarrow 1/(1+n^{2})$.

A simple example in which $S=n-\varepsilon $ is attained has $p=1$, $X=x_{n \times 1}$. Then $Q=x/\left\Vert
x\right\Vert $ and, with $\sigma _{X}^{2}=\left( \sum x^{2}-n\overline{x%
}^{2}\right) /n$,  we have $S = n\overline{x}^{2}/(\sigma _{X}^{2}+\overline{x}^{2})$. Then $S = n-\varepsilon$ if the $x_{i}$ are sufficiently concentrated that $\sigma _{X}^{2}=\varepsilon \overline{x}^{2}/(n-\varepsilon )$. This example extends easily to arbitrary $p$.

\begin{proof}[Proof of Theorem \protect\ref{thm: maxima}]
In the notation\ of the theorem, (\ref{I-cond0}) becomes 
\begin{eqnarray}
\mathcal{I}\left( \psi ,\xi \left\vert {}\right. C,P\right) &=&tr\left\{ F%
\text{\textsc{cov}}\left( \hat{\theta}\left\vert {}\right. C,P\right)
F^{\prime }\right\}  \notag \\
&+&\bar{\psi}^{\prime }J^{\prime }PJF\left( F^{\prime }J^{\prime }PJF\right)
^{-1}F^{\prime }F\left( F^{\prime }J^{\prime }PJF\right) ^{-1}F^{\prime
}J^{\prime }PJ\bar{\psi}+\bar{\psi}^{\prime }\bar{\psi}.  \label{I-loss-0}
\end{eqnarray}%
As in \S \ref{sec: gls}, and taking $K=F^{\prime }F$ in (iv) of that
section, for $C\in \mathcal{C}_{M}$ the trace in (\ref{I-loss-0}) is
maximized by $C=\eta ^{2}I_{n}$, with%
\begin{equation}
trF\text{\textsc{cov}}\left( \hat{\theta}\left\vert {}\right. \eta
^{2}I_{n},P\right) F^{\prime }=\eta ^{2}tr\left\{ F\left( F^{\prime
}J^{\prime }PJF\right) ^{-1}\left( F^{\prime }J^{\prime }P^{2}JF\right)
\left( F^{\prime }J^{\prime }PJF\right) ^{-1}F^{\prime }\right\} .
\label{max cov}
\end{equation}%
Extend the orthogonal basis for $\func{col}\left( F\right) $ -- formed by
the columns of $Q$ \ -- by appending to $Q$ the matrix $Q_{\ast }:N\times
\left( N-p\right) $, whose columns form an orthogonal basis for the
orthogonal complement $\func{col}\left( F\right) ^{\perp }$. Then $(Q\vdots
Q_{\ast }):N\times N$ is an orthogonal matrix and we have that$\ F=$ $QR$
for a non-singular $R$. If the construction is carried out by the
Gram-Schmidt method, then $R$ is upper triangular. \newline
Constraint (\ref{orthogonality}) dictates that $\bar{\psi}$ lie in $\func{col}%
\left( Q_{\ast }\right) $. A maximizing $\psi $ will satisfy (\ref{tau-bound}%
) with equality, hence $\bar{\psi}=\tau Q_{\ast }\beta $ for some $\beta
_{\left( N-p\right) \times 1}$ with unit norm. Combining these observations
along with (\ref{I-loss-0}) and (\ref{max cov}) yields that $\max_{\psi ,C}%
\mathcal{I}\left( \psi ,\xi \left\vert {}\right. C,P\right) $ is given by 
\begin{eqnarray}
&&\eta ^{2}tr\left\{ Q\left( Q^{\prime }UQ\right) ^{-1}\left( Q^{\prime
}VQ\right) \left( Q^{\prime }UQ\right) ^{-1}Q^{\prime }\right\}  \notag \\
&&+\tau ^{2}\max_{\left\Vert \beta \right\Vert =1}\left\{ \beta ^{\prime
}Q_{\ast }^{\prime }UQ\left( Q^{\prime }UQ\right) ^{-1}Q^{\prime }Q\left(
Q^{\prime }UQ\right) ^{-1}Q^{\prime }UQ_{\ast }\beta +1\right\} .
\label{I-loss-1}
\end{eqnarray}%
Here and elsewhere we use that $trAB=trBA$, and that such products have the
same non-zero eigenvalues. Then (\ref{I-loss-1}) becomes $\left( \tau
^{2}+\eta ^{2}\right) $ times $\mathcal{I}_{\nu }\left( \xi ,P\right) $,
given by%
\begin{eqnarray}
&&\mathcal{I}_{\nu }\left( \xi ,P\right) =\left( 1-\nu \right) tr\left\{
\left( Q^{\prime }UQ\right) ^{-1}\left( Q^{\prime }VQ\right) \left(
Q^{\prime }UQ\right) ^{-1}\right\}  \notag \\
&&+\nu \left\{ ch_{\max }Q_{\ast }^{\prime }UQ\left( Q^{\prime }UQ\right)
^{-1}\cdot \left( Q^{\prime }UQ\right) ^{-1}Q^{\prime }UQ_{\ast }+1\right\} .
\label{I-loss-2}
\end{eqnarray}%
The maximum eigenvalue is also that of 
\begin{eqnarray*}
\left( Q^{\prime }UQ\right) ^{-1}Q^{\prime }UQ_{\ast }\cdot Q_{\ast
}^{\prime }UQ\left( Q^{\prime }UQ\right) ^{-1} &=&\left( Q^{\prime
}UQ\right) ^{-1}Q^{\prime }U\left( I_{N}-QQ^{\prime }\right) UQ\left(
Q^{\prime }UQ\right) ^{-1} \\
&=&\left( Q^{\prime }UQ\right) ^{-1}Q^{\prime }U^{2}Q\left( Q^{\prime
}UQ\right) ^{-1}-I_{p};
\end{eqnarray*}%
this in (\ref{I-loss-2}) gives (\ref{max loss}).
\end{proof}

The proof of Theorem \ref{thm: cases} requires a preliminary result.

\begin{lemma}
\label{thm: lemma} (i) For a fixed design $\xi $ and any $P\succ 0$, $%
\mathcal{I}_{0}\left( \xi ,P\right) \geq \mathcal{I}_{0}\left( \xi
,I_{n}\right) $ and $\mathcal{I}_{1}\left( \xi ,P\right) \geq ch_{\max
}\left\{ \left( Q_{+}^{\prime }Q_{+}\right) ^{-1}\right\} $. \newline
If neither of the equations (\ref{Q-inverse}), (\ref{Q-square}) holds, then: 
\newline
(ii) $\mathcal{I}_{0}\left( \xi ,P_{0}\right) >\mathcal{I}_{0}\left( \xi
,I_{n}\right) $ \ and $\mathcal{I}_{1}\left( \xi ,I_{n}\right) >\mathcal{I}%
_{1}\left( \xi ,P_{0}\right) $; \newline
(iii) With $P_{\varepsilon }$ as defined in Theorem \ref{thm: cases}(ii),
and for sufficiently small $\varepsilon >$ $0$, $\Delta _{0}\left(
\varepsilon \right) =\mathcal{I}_{0}\left( \xi ,P_{\varepsilon }\right) -%
\mathcal{I}_{0}\left( \xi ,I_{n}\right) >0$ and $\Delta _{1}\left(
\varepsilon \right) =\mathcal{I}_{1}\left( \xi ,I_{n}\right) -\mathcal{I}%
_{1}\left( \xi ,P_{\varepsilon }\right) >$ $0$.
\end{lemma}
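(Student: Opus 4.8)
The plan is to push everything down to the $q$ support points and then read off the two extremal quantities from the structure of $P_{0}$. Writing $U_{+}=J_{+}^{\prime }PJ_{+}$ and $V_{+}=J_{+}^{\prime }P^{2}J_{+}$, both $q\times q$ and with $U_{+}\succ 0$ since $P\succ 0$ and $J_{+}$ has full column rank, the identities $JQ=J_{+}Q_{+}$ and $JJ^{\prime }=J_{+}J_{+}^{\prime }$ give
\begin{equation*}
Q^{\prime }UQ=Q_{+}^{\prime }U_{+}Q_{+},\qquad Q^{\prime }VQ=Q_{+}^{\prime }V_{+}Q_{+},\qquad Q^{\prime }U^{2}Q=Q_{+}^{\prime }U_{+}^{2}Q_{+},
\end{equation*}
the last because the middle factor $JJ^{\prime }=J_{+}J_{+}^{\prime }$ is absorbed into $U_{+}^{2}$. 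Substituting $P=P_{0}=\alpha J_{+}D_{+}^{-2}J_{+}^{\prime }$ collapses these, since $U_{+}=\alpha I_{q}$ and $V_{+}=\alpha ^{2}D_{+}^{-1}$, to $Q^{\prime }U_{0}Q=\alpha Q_{+}^{\prime }Q_{+}$, $Q^{\prime }V_{0}Q=\alpha ^{2}Q_{+}^{\prime }D_{+}^{-1}Q_{+}$ and $Q^{\prime }U_{0}^{2}Q=\alpha ^{2}Q_{+}^{\prime }Q_{+}$; the scalars cancel in the ratios defining $\mathcal{I}_{0}$ and $\mathcal{I}_{1}$, yielding $\mathcal{I}_{0}(\xi ,P_{0})=tr\{(Q_{+}^{\prime }Q_{+})^{-1}(Q_{+}^{\prime }D_{+}^{-1}Q_{+})(Q_{+}^{\prime }Q_{+})^{-1}\}$ and $\mathcal{I}_{1}(\xi ,P_{0})=ch_{\max }\{(Q_{+}^{\prime }Q_{+})^{-1}\}$. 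Taking instead $P=I_{n}$ gives $U_{+}=V_{+}=D_{+}$, so $\mathcal{I}_{0}(\xi ,I_{n})=tr\{(Q_{+}^{\prime }D_{+}Q_{+})^{-1}\}$ and $\mathcal{I}_{1}(\xi ,I_{n})$ is precisely the left-hand side of (\ref{Q-square}).

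For part (i), the bound on $\mathcal{I}_{0}$ is the Gauss--Markov argument of \S \ref{sec: gls} applied to the reduced design $\widetilde{X}=J_{+}Q_{+}$: the matrix $(Q_{+}^{\prime }U_{+}Q_{+})^{-1}(Q_{+}^{\prime }V_{+}Q_{+})(Q_{+}^{\prime }U_{+}Q_{+})^{-1}$ is the covariance of the \textsc{gls} estimate for $\widetilde{X}$ with precision $P$ and error covariance $I$, which is Loewner-dominated from below by the \textsc{ols} covariance $(\widetilde{X}^{\prime }\widetilde{X})^{-1}=(Q_{+}^{\prime }D_{+}Q_{+})^{-1}$, so taking traces gives $\mathcal{I}_{0}(\xi ,P)\geq \mathcal{I}_{0}(\xi ,I_{n})$. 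The bound on $\mathcal{I}_{1}$ is the crux. I would write the maximum eigenvalue as a Rayleigh quotient,
\begin{equation*}
\mathcal{I}_{1}(\xi ,P)=\max_{z\neq 0}\frac{\left\Vert U_{+}Q_{+}z\right\Vert ^{2}}{\left\Vert Q_{+}^{\prime }U_{+}Q_{+}z\right\Vert ^{2}}=\left( \min_{u\in \mathcal{V}\setminus \{0\}}\frac{u^{\prime }Q_{+}Q_{+}^{\prime }u}{\left\Vert u\right\Vert ^{2}}\right) ^{-1},
\end{equation*}
where $u=U_{+}Q_{+}z$ ranges over the $p$-dimensional subspace $\mathcal{V}=U_{+}\,\mathrm{col}(Q_{+})$; the reciprocal is legitimate because $Q_{+}^{\prime }U_{+}Q_{+}\succ 0$ forces $Q_{+}^{\prime }u\neq 0$ on $\mathcal{V}\setminus \{0\}$. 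Now $Q_{+}Q_{+}^{\prime }$ has exactly $q-p$ zero eigenvalues, so its $(q-p+1)$-th smallest eigenvalue equals $ch_{\min }(Q_{+}^{\prime }Q_{+})$; the max--min form of the Courant--Fischer theorem bounds the minimal Rayleigh quotient over any $p$-dimensional subspace by this eigenvalue, whence $\mathcal{I}_{1}(\xi ,P)\geq 1/ch_{\min }(Q_{+}^{\prime }Q_{+})=ch_{\max }\{(Q_{+}^{\prime }Q_{+})^{-1}\}$.

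Part (ii) then reads off the equality cases. For the variance, the Loewner inequality used in (i), specialized to $P_{0}$, is $(Q_{+}^{\prime }Q_{+})^{-1}(Q_{+}^{\prime }D_{+}^{-1}Q_{+})(Q_{+}^{\prime }Q_{+})^{-1}\succeq (Q_{+}^{\prime }D_{+}Q_{+})^{-1}$; since a positive semidefinite difference of equal trace vanishes, equality of the traces $\mathcal{I}_{0}(\xi ,P_{0})=\mathcal{I}_{0}(\xi ,I_{n})$ would force the two matrices to coincide, which is exactly (\ref{Q-inverse}). Hence failure of (\ref{Q-inverse}) gives $\mathcal{I}_{0}(\xi ,P_{0})>\mathcal{I}_{0}(\xi ,I_{n})$. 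For the bias, part (i) gives $\mathcal{I}_{1}(\xi ,I_{n})\geq ch_{\max }\{(Q_{+}^{\prime }Q_{+})^{-1}\}=\mathcal{I}_{1}(\xi ,P_{0})$; here the left side is the left-hand side of (\ref{Q-square}) and the bound is its right-hand side, so equality holds iff (\ref{Q-square}) holds, and its failure yields the strict inequality $\mathcal{I}_{1}(\xi ,I_{n})>\mathcal{I}_{1}(\xi ,P_{0})$.

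Part (iii) is a continuity argument. On the open set of $P$ with $Q^{\prime }(J^{\prime }PJ)Q\succ 0$ -- which contains $P_{0}$, where this matrix is $\alpha Q_{+}^{\prime }Q_{+}$, and hence a neighbourhood of $P_{0}$ -- both $\mathcal{I}_{0}(\xi ,\cdot )$ and $\mathcal{I}_{1}(\xi ,\cdot )$ are continuous, being traces, inverses and a maximum eigenvalue of matrices depending continuously on $P$. Since $P_{\varepsilon }\rightarrow P_{0}$ as $\varepsilon \rightarrow 0^{+}$, the differences satisfy $\Delta _{0}(\varepsilon )\rightarrow \mathcal{I}_{0}(\xi ,P_{0})-\mathcal{I}_{0}(\xi ,I_{n})>0$ and $\Delta _{1}(\varepsilon )\rightarrow \mathcal{I}_{1}(\xi ,I_{n})-\mathcal{I}_{1}(\xi ,P_{0})>0$ by part (ii), so both are positive for all sufficiently small $\varepsilon >0$. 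The main obstacle is the $\mathcal{I}_{1}$ bound in part (i): one must find the right variational form and then identify the governing eigenvalue as the smallest \emph{positive} eigenvalue of $Q_{+}Q_{+}^{\prime }$, i.e.\ the $(q-p+1)$-th smallest, so that Courant--Fischer applies with the correct index.
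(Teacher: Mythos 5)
Your proof is correct, and while your parts (ii) and (iii) follow essentially the same route as the paper -- a p.s.d.\ difference with vanishing trace must be the zero matrix, which is (\ref{Q-inverse}); equality of the two eigenvalue evaluations is literally (\ref{Q-square}); and (iii) is the same continuity argument -- your treatment of the key bias inequality in (i) is genuinely different. The paper proves $\mathcal{I}_{1}\left( \xi ,P\right) \geq ch_{\max }\left\{ \left( Q_{+}^{\prime }Q_{+}\right) ^{-1}\right\} $ by exhibiting the difference
\begin{equation*}
\left( Q^{\prime }UQ\right) ^{-1}Q^{\prime }U^{2}Q\left( Q^{\prime
}UQ\right) ^{-1}-\left( Q_{+}^{\prime }Q_{+}\right) ^{-1}
\end{equation*}
as a congruence of the projection $I_{q}-Q_{+}\left( Q_{+}^{\prime
}Q_{+}\right) ^{-1}Q_{+}^{\prime }$, hence p.s.d., and then invoking Weyl's monotonicity theorem; this yields the stronger Loewner-ordering statement, of which the eigenvalue bound is a corollary. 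You instead recast $ch_{\max }$ as a Rayleigh quotient, substitute $u=U_{+}Q_{+}z$, and apply the max--min form of Courant--Fischer to the rank-$p$ matrix $Q_{+}Q_{+}^{\prime }$ over the $p$-dimensional subspace $U_{+}\,\mathrm{col}\left( Q_{+}\right) $, correctly identifying the governing eigenvalue as the smallest \emph{positive} one, $ch_{\min }\left( Q_{+}^{\prime }Q_{+}\right) $, with index $q-p+1$. Both arguments are sound; the paper's is more compact and reuses the same projection-sandwich identity it employs for the variance term (which you replace by a citation of Gauss--Markov from \S \ref{sec: gls} -- equivalent, since that identity is precisely the proof of Gauss--Markov), while yours makes the variational structure explicit at the cost of bookkeeping: the bijection $z\mapsto u$, the dimension count, and the justification that reciprocals may be taken because $Q_{+}^{\prime }u\neq 0$ on $\mathcal{V}\setminus \left\{ 0\right\} $, all of which you handle correctly. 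Two further points in your favour: you derive the closed forms (\ref{P-var})--(\ref{P-bias}) for $\mathcal{I}_{0}\left( \xi ,P_{0}\right) $ and $\mathcal{I}_{1}\left( \xi ,P_{0}\right) $ explicitly from $U_{+}=\alpha I_{q}$ and $V_{+}=\alpha ^{2}D_{+}^{-1}$, where the paper simply asserts them, and you observe that both losses remain well defined and continuous on a neighbourhood of the singular matrix $P_{0}$ because $Q^{\prime }\left( J^{\prime }PJ\right) Q\succ 0$ there -- a point the paper's continuity step in (iii) leaves implicit.
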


\begin{proof}[Proof of Lemma \protect\ref{thm: lemma}]
(i) From (\ref{max var}), $\mathcal{I}_{0}\left( \xi ,P\right) -\mathcal{I}%
_{0}\left( \xi ,I_{n}\right) $ is the trace of%
\begin{eqnarray*}
&&\left( Q^{\prime }J^{\prime }PJQ\right) ^{-1}\left( Q^{\prime }J^{\prime
}P^{2}JQ\right) \left( Q^{\prime }J^{\prime }PJQ\right) ^{-1}-\left(
Q^{\prime }J^{\prime }JQ\right) ^{-1} \\
&=&\left( Q^{\prime }J^{\prime }PJQ\right) ^{-1}Q^{\prime }J^{\prime
}P\left\{ I_{n}-JQ\left( Q^{\prime }J^{\prime }JQ\right) ^{-1}Q^{\prime
}J^{\prime }\right\} PJQ\left( Q^{\prime }J^{\prime }PJQ\right) ^{-1},
\end{eqnarray*}%
which is $\succeq 0$ (since the matrix in braces is idempotent, hence
p.s.d.) with non-negative trace. For the second inequality first note that 
\begin{eqnarray*}
&&\left( Q^{\prime }UQ\right) ^{-1}Q^{\prime }U^{2}Q\left( Q^{\prime
}UQ\right) ^{-1}-\left( Q_{+}^{\prime }Q_{+}\right) ^{-1} \\
&=&\left( Q_{+}^{\prime }J_{+}^{\prime }PJ_{+}Q_{+}\right)
^{-1}Q_{+}^{\prime }J_{+}^{\prime }PJ_{+}J_{+}^{\prime }PJ_{+}Q_{+}\left(
Q_{+}^{\prime }J_{+}^{\prime }PJ_{+}Q_{+}\right) ^{-1}-\left( Q_{+}^{\prime
}Q_{+}\right) ^{-1} \\
&=&\left( Q_{+}^{\prime }J_{+}^{\prime }PJ_{+}Q_{+}\right)
^{-1}Q_{+}^{\prime }J_{+}^{\prime }PJ_{+}\left\{ I_{n}-Q_{+}\left(
Q_{+}^{\prime }Q_{+}\right) ^{-1}Q_{+}^{\prime }\right\} J_{+}^{\prime
}PJ_{+}Q_{+}\left( Q_{+}^{\prime }J_{+}^{\prime }PJ_{+}Q_{+}\right) ^{-1}
\end{eqnarray*}%
is p.s.d., so that by Weyl's Monotonicity Theorem (\cite{b97}),%
\begin{equation*}
\mathcal{I}_{1}\left( \xi ,P\right) =ch_{\max }\left\{ \left( Q^{\prime
}UQ\right) ^{-1}Q^{\prime }U^{2}Q\left( Q^{\prime }UQ\right) ^{-1}\right\}
\geq ch_{\max }\left\{ \left( Q_{+}^{\prime }Q_{+}\right) ^{-1}\right\} .%
\newline
\end{equation*}%
(ii) We use the following identities, which follow from (\ref{max var}) and (%
\ref{max bias}), expressed in the notation preceding the statement of
Theorem \ref{thm: cases}: 
\begin{subequations}%
\label{P-losses} 
\begin{eqnarray}
\mathcal{I}_{0}\left( \xi ,I_{n}\right)  &=&tr\left\{ \left( Q_{+}^{\prime
}D_{+}Q_{+}\right) ^{-1}\right\}   \label{I-var} \\
\mathcal{I}_{1}\left( \xi ,I_{n}\right)  &=&ch_{\max }\left\{ \left(
Q_{+}^{\prime }D_{+}Q_{+}\right) ^{-1}\left( Q_{+}^{\prime
}D_{+}^{2}Q_{+}\right) \left( Q_{+}^{\prime }D_{+}Q_{+}\right) ^{-1}\right\} 
\label{I-bias} \\
\mathcal{I}_{0}\left( \xi ,P_{0}\right)  &=&tr\left\{ \left( Q_{+}^{\prime
}Q_{+}\right) ^{-1}\left( Q_{+}^{\prime }D_{+}^{-1}Q_{+}\right) \left(
Q_{+}^{\prime }Q_{+}\right) ^{-1}\right\} ,  \label{P-var} \\
\mathcal{I}_{1}\left( \xi ,P_{0}\right)  &=&ch_{\max }\left\{ \left(
Q_{+}^{\prime }Q_{+}\right) ^{-1}\right\} .\newline
\label{P-bias}
\end{eqnarray}%
\end{subequations}%
To prove (ii) we show that if either inequality fails then one of (\ref%
{Q-inverse}), (\ref{Q-square}) holds -- a contradiction. First note that 
\begin{eqnarray}
&&\mathcal{I}_{0}\left( \xi ,P_{0}\right) -\mathcal{I}_{0}\left( \xi
,I_{n}\right)   \notag \\
&=&tr\left\{ \left( Q_{+}^{\prime }Q_{+}\right) ^{-1}\left( Q_{+}^{\prime
}D_{+}^{-1}Q_{+}\right) \left( Q_{+}^{\prime }Q_{+}\right) ^{-1}-\left(
Q_{+}^{\prime }D_{+}Q_{+}\right) ^{-1}\right\}   \label{diff trace} \\
&=&tr\left\{ \left( Q_{+}^{\prime }Q_{+}\right) ^{-1}Q_{+}^{\prime
}D_{+}^{-1/2}\left[ I_{q}-D_{+}^{1/2}Q_{+}\left( Q_{+}^{\prime
}D_{+}Q_{+}\right) ^{-1}Q_{+}^{\prime }D_{+}^{1/2}\right] D_{+}^{-1/2}Q_{+}%
\left( Q_{+}^{\prime }Q_{+}\right) ^{-1}\right\} ,  \notag
\end{eqnarray}%
which is non-negative. If the first inequality fails, so that $\mathcal{I}%
_{0}\left( \xi ,P_{0}\right) =\mathcal{I}_{0}\left( \xi ,I_{n}\right) $,
then the trace of the p.s.d. matrix at (\ref{diff trace}) is zero, hence all
eigenvalues are zero and the matrix is the zero matrix. This is (\ref%
{Q-inverse}). \newline
That $\mathcal{I}_{1}\left( \xi ,I_{n}\right) -\mathcal{I}_{1}\left( \xi
,P_{0}\right) \geq 0$ is the first inequality in (i). If the second
inequality of (ii) fails, then $\mathcal{I}_{1}\left( \xi ,I_{n}\right) =%
\mathcal{I}_{1}\left( \xi ,P_{0}\right) $ and their evaluations at (\ref%
{I-bias}) and (\ref{P-bias}) give (\ref{Q-square}). \newline
For (iii), that $\Delta _{0}\left( \varepsilon \right) >0$ and $\Delta
_{1}\left( \varepsilon \right) >0$ for sufficiently small $\varepsilon $
follow from the continuity of $\mathcal{I}_{0}\left( \xi ,P_{\varepsilon
}\right) $ and $\mathcal{I}_{1}\left( \xi ,P_{\varepsilon }\right) $ as
functions of $\varepsilon $: $\Delta _{0}\left( \varepsilon \right)
\rightarrow \mathcal{I}_{0}\left( \xi ,P_{0}\right) -\mathcal{I}_{0}\left(
\xi ,I_{n}\right) >0$ and $\Delta _{1}\left( \varepsilon \right) =\mathcal{I}%
_{1}\left( \xi ,I_{n}\right) -\mathcal{I}_{1}\left( \xi ,P_{0}\right) >0$ as 
$\varepsilon \rightarrow 0$.
\end{proof}

\medskip 

\begin{proof}[Proof of Theorem \protect\ref{thm: cases}]
(i) From the first inequality in Lemma \ref{thm: lemma} (i), 
\begin{equation*}
\mathcal{I}_{0}\left( \xi ,I_{n}\right) =\min_{P\succ 0}\mathcal{I}%
_{0}\left( \xi ,P\right) .
\end{equation*}%
If $P=I_{n}$ then $U=J^{\prime }PJ=J^{\prime }J=D_{+}=kI_{q}$, so that from (%
\ref{I-bias}), and the second inequality in Lemma \ref{thm: lemma}(i), 
\begin{equation*}
\mathcal{I}_{1}\left( \xi ,I_{n}\right) =ch_{\max }\left\{ \left(
Q_{+}^{\prime }Q_{+}\right) ^{-1}\right\} =\min_{P\succ 0}\mathcal{I}%
_{1}\left( \xi ,P\right) .
\end{equation*}%
Now (\ref{minimaxity}) is immediate. If $P_{0}=I_{n}$ then $q=rk\left(
P_{0}\right) =n$, so that all $n$ observations are made at distinct points,
hence $D_{+}=I_{n}$ and the design is uniform on its support. $\newline
$(ii) \ By Lemma \ref{thm: lemma}(iii) there is $\varepsilon _{0}>0$ for
which $\Delta _{0}\left( \varepsilon \right) >0$ and $\Delta _{1}\left(
\varepsilon \right) >$ $0$ when $0<\varepsilon \leq \varepsilon _{0}$. For $%
\varepsilon $ in this range, 
\begin{equation*}
\mathcal{I}_{\nu }\left( \xi ,I_{n}\right) -\mathcal{I}_{\nu }\left( \xi
,P_{\varepsilon }\right) =\nu \left( \Delta _{0}\left( \varepsilon \right)
+\Delta _{1}\left( \varepsilon \right) \right) -\Delta _{0}\left(
\varepsilon \right) >0,
\end{equation*}%
for $\nu \in (\nu _{0},1]$ and $\nu _{0}\equiv \Delta _{0}\left( \varepsilon
\right) /\left( \Delta _{0}\left( \varepsilon \right) +\Delta _{1}\left(
\varepsilon \right) \right) $.
\end{proof}

\section*{Acknowledgements}

This work was carried out with the support of the Natural Sciences and
Engineering Research Council of Canada.

\bibliography{references}

\begin{thebibliography}{18}
\newcommand{\enquote}[1]{``#1''}
\expandafter\ifx\csname natexlab\endcsname\relax\def\natexlab#1{#1}\fi

\bibitem[{Aitken(1935)}]{a35}
Aitken, A.~C. (1935), \enquote{On Least Squares and Linear Combinations of Observations,} \textit{Proceedings of the Royal Society of Edinburgh}, 55, 42--48.

\bibitem[{Bhatia(1997)}]{b97}
Bhatia, R. (1997), \textit{Matrix Analysis}, Berlin: Springer.

\bibitem[{Cochrane and Orcutt(1949)}]{co49}
Cochrane, D. and Orcutt, G.~H. (1949), \enquote{Application of Least Squares Regression to Relationships Containing Autocorrelated Error Terms,} \textit{Journal of the American Statistical Association}, 44, 32--61.

\bibitem[{Fang and Wiens(2000)}]{fw00}
Fang, Z. and Wiens, D.~P. (2000), \enquote{Integer-Valued, Minimax Robust Designs for Estimation and Extrapolation in Heteroscedastic, Approximately Linear Models,} \textit{Journal of the American Statistical Association}, 95, 807--818.

\bibitem[{Fomby et~al.(1984)Fomby, Johnson, and Hill}]{fjh84}
Fomby, T.~B., Johnson, S.~R., and Hill, R.~C. (1984), \enquote{Feasible Generalized Least Squares Estimation,} in \textit{Advanced Econometric Methods}, New York, NY: Springer.

\bibitem[{Heo et~al.(2001)Heo, Schmuland, and Wiens}]{hsw01}
Heo, G., Schmuland, B., and Wiens, D.~P. (2001), \enquote{Restricted Minimax Robust Designs for Misspecified Regression Models,} \textit{The Canadian Journal of Statistics}, 29, 117--128.

\bibitem[{Huber(1972)}]{h72}
Huber, P.~J. (1972), \enquote{The 1972 Wald Lecture Robust Statistics: A Review,} \textit{The Annals of Mathematical Statistics}, 43, 1041--1067.

\bibitem[{Kennedy and Eberhart(1995)}]{ke95}
Kennedy, J. and Eberhart, R. (1995), \enquote{Particle Swarm Optimization,} in \textit{Proceedings of ICCN'95 - International Conference on Neural Networks}, WA, Australia, vol. 4: Perth, pp. 1942--1948.

\bibitem[{Studden(1977)}]{s77}
Studden, W.~J. (1977), \enquote{Optimal Designs for Integrated Variance in Polynomial Regression,} in \textit{Statistical Decision Theory and Related Topics II}, eds. Gupta, S.~S. and Moore, D., York: Academic Press, pp. 411--420.

\bibitem[{Trench(1999)}]{t99}
Trench, W.~F. (1999), \enquote{Asymptotic Distribution of the Spectra of a Class of Generalized Kac-Murdock-Szeg\"{o} Matrices,} \textit{Linear Algebra and Its Applications}, 294, 181--192.

\bibitem[{Waite(2024)}]{wt24}
Waite, T.~W. (2024), \enquote{Replication in Random Translation Designs,} \textit{Statistics and Probability Letters}, 215:110229.

\bibitem[{Waite and Woods(2022)}]{ww22}
Waite, T.~W. and Woods, D.~C. (2022), \enquote{Minimax Efficient Random Experimental Design Strategies With Application to Model-Robust Design for Prediction,} \textit{Journal of the American Statistical Association}, 117, 1452--1465.

\bibitem[{Wiens(1992)}]{w92}
Wiens, D.~P. (1992), \enquote{Minimax Designs for Approximately Linear Regression,} \textit{Journal of Statistical Planning and Inference}, 31, 353--371.

\bibitem[{Wiens(2015)}]{w15}
--- (2015), \enquote{Robustness of Design,} \textit{Handbook of Design and Analysis of Experiments}.

\bibitem[{Wiens(2018)}]{w18}
--- (2018), \enquote{I-Robust and D-Robust Designs on a Finite Design Space,} \textit{Statistics and Computing}, 28, 241--258.

\bibitem[{Wiens(2024{\natexlab{a}})}]{w24b}
--- (2024{\natexlab{a}}), \enquote{Jittering and Clustering: Strategies for the Construction of Robust Designs,} \textit{Statistics and Computing}, 34:129.

\bibitem[{Wiens(2024{\natexlab{b}})}]{w24a}
--- (2024{\natexlab{b}}), \enquote{A Note on Minimax Robustness of Designs Against Correlated or Heteroscedastic Responses,} \textit{Biometrika}, 111, 1071--1075.

\bibitem[{Zhang et~al.(2025)Zhang, Ding, Zhou, and Wang}]{zz}
Zhang, Z., Ding, P., Zhou, W., and Wang, H. (2025), \enquote{With Random Regressors, Least Squares Inference is Robust to Correlated Errors With Unknown Correlation Structure,} \textit{Biometrika}, 112, in press.

\end{thebibliography}
\end{document}